\documentclass[10pt]{article}

\usepackage[
	a4paper,
	margin=1in
]{geometry}
\usepackage{setspace}

\usepackage{sectsty} 
\usepackage{titlesec}
\allsectionsfont{\boldmath} 

\usepackage{appendix}

\usepackage{amsmath, amssymb, amsthm}
\usepackage{mathtools}

\usepackage[
	setpagesize=false,
	colorlinks=true,
	linkcolor=BrickRed,
        citecolor=OliveGreen,
        urlcolor=black,
	pdfencoding=auto,
	psdextra,
]{hyperref}
\usepackage{cleveref}

\usepackage{etoolbox} 
\usepackage[shortlabels]{enumitem}
\usepackage{xparse} 
\usepackage{blkarray,multirow}

\usepackage{graphicx}
\usepackage[dvipsnames]{xcolor}
\usepackage{tikz}
\usetikzlibrary{patterns}
\usetikzlibrary{decorations.markings, arrows.meta}

\usepackage{todonotes}
\usepackage[
    deletedmarkup=sout,
    commentmarkup=uwave,
    authormarkuptext=name
]{changes}

\usepackage{comment}

\usepackage{tocloft}

\setlist[enumerate,1]{label=(\arabic*), ref=(\arabic*)}
\setlist[enumerate,3]{label=(\roman*), ref=(\roman*)}

\theoremstyle{plain}
\newtheorem{theorem}{Theorem}[section]
\newtheorem{lemma}[theorem]{Lemma}
\newtheorem{corollary}[theorem]{Corollary}
\newtheorem{proposition}[theorem]{Proposition}

\newtheorem{conjecture}[theorem]{Conjecture}

\newtheorem*{claim*}{Claim}



\theoremstyle{definition}
\newtheorem{definition}[theorem]{Definition}

\newcommand{\calC}{\mathcal{C}}

\newcommand{\calE}{\mathcal{E}}
\newcommand{\calF}{\mathcal{F}}

\newcommand{\calH}{\mathcal{H}}
\newcommand{\calL}{\mathcal{L}}

\newcommand{\calP}{\mathcal{P}}

\newcommand{\calS}{\mathcal{S}}

\newcommand{\ve}{\varepsilon}

\newcommand{\ori}[1]{\overrightarrow{#1}}
\newcommand{\dir}[1]{\overleftrightarrow{#1}}

\newcommand{\defeq}{\coloneqq}

\title{A counting version of Petersen's $2$-factor theorem}

\author{
Hyunwoo Lee%
        \thanks{Department of Mathematical Sciences, KAIST, South Korea and Extremal Combinatorics and Probability Group (ECOPRO), Institute for Basic Science (IBS).
        E-mail: {\ttfamily hyunwoo.lee@kaist.ac.kr.} Supported by the National Research Foundation of Korea (NRF) grant funded by the Korea government(MSIT) No. RS-2023-00210430, and the Institute for Basic Science (IBS-R029-C4).}
}

\usepackage[square,sort,comma,numbers]{natbib}
\begin{document}
\maketitle

\begin{abstract}
    A classical result of Petersen states that every regular graph of even degree has a $2$-factor. We prove that every $n$-vertex $2r$-regular simple graph contains at least $\left((1 + o_r(1))\frac{2r}{e}\right)^n$ distinct $2$-factors. This improves the previously known lower bound $\left((1 + o_r(1))\frac{r}{e}\right)^n$ by a factor of $2^{(1 + o(1))n}$ and is asymptotically tight for large $r$. As a direct consequence, we determine asymptotically tight bounds on the number of $2$-factorizations of a given $2r$-regular simple graph for every sufficiently large $r$.
\end{abstract}


\section{Introduction}\label{sec:intro}

Petersen's $2$-factor theorem~\cite{Petersen}, a cornerstone of early graph theory, states that every $2r$-regular graph admits a decomposition into $r$ edge-disjoint $2$-factors. Equivalently, every regular graph of even degree contains a $2$-factor as a spanning subgraph. This naturally raises the quantitative question of how many distinct $2$-factors a regular graph of even degree must contain. Although Petersen's theorem also holds for multigraphs, we restrict our attention throughout to simple graphs to avoid unnecessary technical complications.

An influential precedent for this type of counting question is provided by another classical theorem of Petersen~\cite{Petersen}, which states that every bridgeless cubic graph has a perfect matching. Its celebrated quantitative strengthening, the Lov\'{a}sz--Plummer conjecture~\cite[Conjecture 8.1.8]{Lovasz-Plummer}, proved by Esperet, Kardo\v{s}, King, Kr\'{a}l, and Norine~\cite{Lovasz-Plummer-Proof}, asserts that every bridgeless cubic graph contains exponentially many perfect matchings. Taking complements within the edge set of a cubic graph gives a bijection between perfect matchings and $2$-factors. Thus, the Lov\'{a}sz--Plummer conjecture can equivalently be viewed as an exponential counting theorem for $2$-factors in bridgeless cubic graphs. For another example, Schrijver~\cite{Schrijver-1-factor} established an asymptotically tight lower bound on the number of $1$-factors in an $r$-regular bipartite graph, which is a counting version of a classical graph theory result. Also, each $1$-factorization of a complete bipartite graph corresponds to a Latin square; the number of Latin squares of order $n$ was calculated by estimating the number of $1$-factorizations of complete bipartite graphs~\cite{Schrijver-Latin,Cameron-Latin}.

Returning to $2$-factors in regular graphs of even degree, a standard argument based on the Egorychev--Falikman theorem~\cite{Egorychev,Falikman} shows that every $n$-vertex $2r$-regular graph contains at least
$\left((1 + o_r(1))\frac{r}{e}\right)^n$ distinct $2$-factors~\cite[Lemma~18]{Knox-Kuhn-Osthus};
see also~\cite[proof of Lemma~2]{Frieze-Krivelevich}. On the other hand, the Br\'{e}gman--Minc inequality~\cite{Minc,Bregman} gives the upper bound $\left((1+o_r(1))\frac{2r}{e}\right)^n$, leaving an exponential gap of roughly $2^n$ between the best-known general bounds. Robalewska~\cite{Robalewska} determined the asymptotic distribution of the number of $2$-factors in a random regular graph. In particular, her result implies that, at the exponential scale, a typical $2r$-regular graph contains $\left((1+o_r(1))\frac{2r}{e}\right)^n$ many $2$-factors. Thus, the general upper bound captures the typical behavior, whereas the corresponding worst-case lower bound remained open.

Our main result closes this gap and thereby establishes an asymptotically tight counting version of Petersen's $2$-factor theorem. To state our results, for every integer $s\geq 1$ and graph $G$, we write $\calF_s(G)$ for the set of $s$-factors of $G$. For later purposes, for a given digraph $D$, we also denote by $\calF_s(D)$ the set of spanning subdigraphs of $D$ where both the out- and in-degrees are $s$\footnote{We use the same notation, $s$-regular, for digraphs with this property.} for all the vertices.

\begin{theorem}\label{thm:main}
    Let $G$ be an $n$-vertex $2r$-regular graph. Then we have
    \begin{equation*}
        |\calF_2(G)| = \left((1 + o_r(1)) \frac{2r}{e} \right)^n.
    \end{equation*}
\end{theorem}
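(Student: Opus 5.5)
The upper bound is the easy half and I would do it first. Let $A$ be the adjacency matrix of $G$. Every $2$-factor $F\in\calF_2(G)$ is a disjoint union of cycles of length at least $3$, because $G$ is simple. Orienting each cycle of $F$ gives a permutation $\sigma$ with $A_{v\sigma(v)}=1$ for all $v$, and distinct $F$ give distinct $\sigma$. Hence $|\calF_2(G)|\le \operatorname{perm}(A)$. The Br\'egman--Minc inequality then gives
\[
|\calF_2(G)|\le ((2r)!)^{n/(2r)}=\left((1+o_r(1))\tfrac{2r}{e}\right)^n
\]
by Stirling's formula.

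For the lower bound I would refine the standard Egorychev--Falikman argument by double counting over Eulerian orientations rather than fixing one. Let $\calE(H)$ be the set of Eulerian orientations of an even-degree graph $H$. For $D\in\calE(G)$, $D$ is an $r$-regular oriented graph. Its bipartite double cover is $r$-regular, so Egorychev--Falikman gives $|\calF_1(D)|\ge((1+o_r(1))r/e)^n$. Since $D$ has no $2$-cycles, each element of $\calF_1(D)$ is an oriented $2$-factor of $G$.

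Now count pairs $(D,\vec F)$ with $D\in\calE(G)$ and $\vec F\in\calF_1(D)$. A fixed $F\in\calF_2(G)$ with $c(F)$ cycles arises as follows. Choose one of the $2^{c(F)}$ cyclic orientations of $F$; the remaining edges must then form an Eulerian orientation of the $2(r-1)$-regular graph $G-F$. Therefore
\begin{equation*}
|\calE(G)|\left((1+o_r(1))\tfrac{r}{e}\right)^n\le \sum_{F\in\calF_2(G)}2^{c(F)}|\calE(G-F)|,
\end{equation*}
and it suffices to show that the right-hand side is at most $|\calF_2(G)|\cdot|\calE(G)|\,2^{-(1-o_r(1))n}$.

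Two issues remain, and I would handle them in this order.

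\textbf{Removing the factor $2^{c(F)}$.} I would restrict to pairs in which $\vec F$ has $o(n)$ cycles, say with total length of cycles shorter than $\ell(r)\to\infty$ at most $o_r(1)n$. To do this, I would bound the expected number of short cycles of a uniformly random element of $\calF_1(D)$, for instance via van der Waerden/Br\'egman ratio estimates for permanents of minors. Forcing a fixed short cycle $v_1\to\cdots\to v_k$ multiplies the count by about $(e/r)^k$, and there are at most $n r^{k}$ candidate cycles. This should yield the desired $o_r(1)n$ bound.

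\textbf{The main obstacle: the ratio $|\calE(G-F)|/|\calE(G)|$.} The naive Schrijver lower bound and Las Vergnas-type upper bounds on Eulerian orientations lose a factor $r^{\Theta(n)}$ here, so they are too weak. I would instead compare the two counts locally by an entropy argument. Take a uniformly random Eulerian orientation of $G$ and condition on the orientation of $E(F)$ being a fixed cyclic one. I would aim to show this event has probability at least $2^{-(1+o_r(1))n}$, up to the $2^{c(F)}$ factor already controlled. Heuristically, each vertex sees its two $F$-edges oriented as one in and one out with probability about $1/2$ when $r$ is large, since local in/out patterns of a random Eulerian orientation are nearly uniform among balanced ones. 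I would try to make this rigorous with Shearer's lemma or a chain-rule entropy bound over the vertices. If that fails, I would fall back on a transfer argument: any Eulerian orientation of $G-F$, together with the $2^{c(F)}$ orientations of $F$, injects into $\calE(G)$ with controlled multiplicity. I expect this step to carry all the difficulty of the theorem.
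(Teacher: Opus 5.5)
\textbf{There is a genuine gap: the ratio bound for Eulerian orientations is not proved.} Your upper bound matches the paper's. So does your double-counting reduction: applying Egorychev--Falikman to each $D\in\calE(G)$ and summing over pairs gives $|\calE(G)|(r/e)^n\le\sum_{F\in\calF_2(G)}2^{c(F)}|\calE(G-F)|$, exactly as in Lemma~\ref{lem:comp-exp-lower}.

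\textbf{The $2^{c(F)}$ factor.} This part is repairable, but not quite as you describe it. Br\'egman--Minc on a minor combined with Egorychev--Falikman on the full matrix loses a global factor $e^{\Theta(n\log r/r)}$. So forcing one short cycle costs $(e/r)^k e^{\Theta(n\log r/r)}$, not ``about $(e/r)^k$'', and a first-moment count of short cycles fails. You would have to bound the probability of containing $\delta n$ disjoint short cycles at once, as in Lemma~\ref{lem:loose-many}. The paper's route is simpler (Lemma~\ref{lem:comp-exp-upper}): at most $\binom{n}{\lceil\gamma n\rceil}(2r)^{(1-\gamma)n}\le 4^n r^{(1-\gamma)n}$ cyclically oriented $2$-factors have more than $\gamma n$ cycles, which is negligible against $(2r/e)^n$.

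\textbf{The missing step.} What remains is $|\calE(G)|\ge(2-o_r(1))^n|\calE(G-F)|$ for every $2$-factor $F$. This is Theorem~\ref{thm:ratio-EO}, the paper's new ingredient, and you do not prove it.
\begin{itemize}
\item As written you aim at the wrong inequality. The probability that a uniform element of $\calE(G)$ induces a fixed cyclic orientation on $E(F)$ equals $|\calE(G-F)|/|\calE(G)|$. You need this to be \emph{at most} $2^{-(1-o_r(1))n}$. A lower bound $2^{-(1+o_r(1))n}$ bounds $|\calE(G)|/|\calE(G-F)|$ from above, which is useless here.
\item Shearer or chain-rule entropy bounds only bound $|\calE(G-F)|$ from above in absolute terms. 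You would then need a matching lower bound on $|\calE(G)|$, and the general bounds (Schrijver versus Br\'egman) differ by $r^{\Theta(n)}$, as you note yourself.
\item The claim that local in/out patterns are nearly uniform and nearly independent across vertices is the entire difficulty, and you leave it unjustified.
\item The injection fallback gives only $|\calE(G)|\ge 2^{c(F)}|\calE(G-F)|$. That is $2^{o(n)}$ in exactly the regime you restricted to. Gaining $2^n$ requires handling orientations of $F$ that are unbalanced, which forces re-orienting edges of $G-F$.
\end{itemize}

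\textbf{The paper's idea.} The paper confines this rebalancing to bounded degree. Fix $k$ large with $r\gg k$.
\begin{itemize}
\item For a $2k$-regular $k$-loose $H$, the transition-system identity $|\calE(H)|(k!)^n=\sum_P 2^{c(P)}$ with $c(P)\le 2n/k$ gives $|\calE(H)|\le 2^{2n/k}\bigl(2^{-k}\binom{2k}{k}\bigr)^n$. Schrijver's lower bound for $H\cup F$ then yields $|\calE(H\cup F)|\ge(2-\ve)^n|\calE(H)|$.
\item This gain transfers to $G'=G-F$ by double counting $X=\sum_H|\calE(G'\setminus H)||\calE(H)|$ and $Y=\sum_H|\calE(G'\setminus H)||\calE(H\cup F)|$ over $k$-loose $2k$-factors $H$ of $G'$.
\item $X\ge|\calE(G')|\bigl((1-\ve)A_{r,k}\bigr)^n$, because most $k$-factors of an $r$-regular oriented graph are $k$-loose.
\item $Y\le|\calE(G)|\bigl((1+\ve)A_{r,k}\bigr)^n$. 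Inside a fixed $D\in\calE(G)$, the relevant subdigraphs of $D-\ori{F}$ have out/in-degrees $k\pm t_v$, and Br\'egman--Minc bounds their number (Lemma~\ref{lem:imbalanced}).
\item Combining with $Y\ge(2-\ve)^nX$ gives the ratio.
\end{itemize}
Without an argument of this kind, your proposal reduces the theorem to its hardest step and stops there.
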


We remark that the term $o_r(1)$ is unavoidable. Indeed, for small values of $r$, the lower bounds and upper bounds on $|\calF_2(G)|$ could be exponential. For example, suppose that $n$ is divisible by $40$. Let $G_1$ be the disjoint union of $n/8$ copies of $K_{4,4}$, and let $G_2$ be the disjoint union of $n/5$ copies of $K_5$. Then $|\calF_2(G_1)| \approx 1.755^n$ while $|\calF_2(G_2)| \approx 1.644^n$, which differ by an exponential factor.

By repeatedly applying Theorem~\ref{thm:main}, we deduce the following corollary on the number of distinct $2$-factorizations of $2r$-regular graphs.

\begin{corollary}\label{cor:factorization}
    Let $G$ be an $n$-vertex $2r$-regular graph. Then the number of distinct (ordered) $2$-factorizations of $G$ is 
    \begin{equation*}
        \left((1 + o_r(1)) \frac{2r}{e^2} \right)^{rn}.
    \end{equation*}
    
\end{corollary}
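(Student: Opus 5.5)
We prove Corollary~\ref{cor:factorization} by peeling off one $2$-factor at a time and applying Theorem~\ref{thm:main} to the $2(r-i)$-regular remainder at each stage. An ordered $2$-factorization of $G$ is a sequence $(F_1,\dots,F_r)$ with $F_i\in\calF_2(G_{i-1})$, where $G_0=G$ and $G_i=G_{i-1}-E(F_i)$. Each $G_{i-1}$ is $2(r-i+1)$-regular, and $F_r$ is forced to equal $G_{r-1}$. Hence the number $N(G)$ of ordered factorizations satisfies
\begin{equation*}
N(G)=\sum_{F_1\in\calF_2(G_0)}\ \sum_{F_2\in\calF_2(G_1)}\cdots\ 1,
\end{equation*}
so lower and upper bounds follow from bounding $|\calF_2(H)|$ uniformly over all $2k$-regular $H$ with $k=r-i+1$.

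\textbf{Upper bound.} The Br\'egman--Minc bound quoted in the introduction holds for every $k\ge 1$: a $2k$-regular $H$ has $|\calF_2(H)|\le (C_k\,2k/e)^n$, where $C_k\to 1$ and $\sup_k C_k<\infty$. (This constant comes from $(k!)^{1/k}$ versus $k/e$, via Stirling.) This gives
\begin{equation*}
N(G)\le \prod_{k=1}^{r}\left(C_k\frac{2k}{e}\right)^n .
\end{equation*}

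\textbf{Lower bound.} For large $k$ we use Theorem~\ref{thm:main}, which gives $|\calF_2(H)|\ge ((1-\ve_k)2k/e)^n$ with $\ve_k\to0$. For small $k$ we use the trivial bound $|\calF_2(H)|\ge1$ from Petersen's theorem.

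\textbf{Evaluating the product.} Write $\log N(G)=n\sum_{k=1}^{r}\log(2k/e)+n\sum_k O(\log(1\pm\ve_k))$. By Stirling, $\sum_{k=1}^r\log(2k/e)=r\log 2+\log r!-r=r\log(2r/e^2)+O(\log r)$. The error sum is $o(r)$ because $\ve_k\to0$ and the terms are bounded. For the lower bound, the small-$k$ terms where the trivial bound was used each lose at most $\log(2k/e)$. Fix $K$ so large that $\ve_k<\delta$ for $k>K$; the total loss is then $O(K\log K)+\delta r=o(r)$ as $r\to\infty$. Therefore $N(G)=\exp\bigl(rn(\log(2r/e^2)+o_r(1))\bigr)$, which is $\bigl((1+o_r(1))2r/e^2\bigr)^{rn}$ as claimed.

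The main obstacle is bookkeeping rather than new ideas. The $o_r(1)$ in Theorem~\ref{thm:main} must be uniform in $n$, so that it can be applied to each intermediate graph, and the averaging over $k$ must be handled carefully. Since $\log(2k/e)$ summed over $k\le r$ is dominated by large $k$, the Ces\`aro-type argument above absorbs both the poor small-$k$ estimates and the slowly decaying errors into $o_r(1)$ in the exponent per factor $rn$.
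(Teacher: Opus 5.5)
Your proposal is correct and follows the same route the paper indicates: peel off one $2$-factor at a time, bound $|\calF_2|$ of each $2k$-regular remainder from below by Theorem~\ref{thm:main} and from above by the Br\'egman--Minc bound of Proposition~\ref{prop:BM-upper}, and sum $\log(2k/e)$ over $k\le r$ using Stirling. Your Ces\`aro-type treatment of the small-$k$ stages, where Theorem~\ref{thm:main} gives no useful bound, supplies bookkeeping that the paper leaves implicit.
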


We note that Corollary~\ref{cor:factorization} is also asymptotically tight for large $r$. In detail, Corollary~\ref{cor:factorization} implies that every $n$-vertex $2r$-regular graph has $\left( (1 + o_r(1))\frac{2r}{e^2} \right)^{rn}$ distinct $2$-factorizations, by Stirling's approximation.


\section{Permanent inequalities}\label{sec:permanents}

A \emph{permanent} of an $n$ by $n$ square matrix $A$ is defined by
$$
    \operatorname{per}(A) \defeq \sum_{\sigma \in S_n} \prod_{i \in [n]} A_{i,\sigma(i)},
$$
which is closely related to several counting problems in graph theory. To prove our main results, we need two celebrated inequalities on permanents, so-called the Br\'{e}gman--Minc inequality~\cite{Minc,Bregman} and the Egorychev--Falikman theorem~\cite{Egorychev,Falikman}. Resolving a conjecture of Minc~\cite{Minc}, Br\'{e}gman~\cite{Bregman} proved the following theorem on the upper bound of permanents of $0$--$1$ matrices.

\begin{theorem}[Br\'{e}gman--Minc inequality~\cite{Minc,Bregman}]\label{thm:Bregman-Minc}
    Let $A$ be an $n$ by $n$ square matrix whose entries are either $0$ or $1$. Assume each $i\in [n]$ has row sum $r_i$. Then we have
    \begin{equation*}
        \operatorname{per}(A) \leq \prod_{i\in [n]} (r_i!)^{\frac{1}{r_i}}.
    \end{equation*}
\end{theorem}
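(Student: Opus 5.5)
The statement immediately above is the Brégman--Minc inequality (Theorem~\ref{thm:Bregman-Minc}), a classical result we simply cite. If a self-contained argument were wanted, I would follow Schrijver's entropy-flavoured proof, or Radhakrishnan's entropy proof. The approach is to view $\operatorname{per}(A)$ as the number $N$ of permutations $\sigma$ with $A_{i,\sigma(i)}=1$ for all $i$. Choose such a $\sigma$ uniformly at random, so that $\log N = H(\sigma) = H(\sigma(1),\dots,\sigma(n))$, where $H$ is Shannon entropy.

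The key step is to expand this entropy by the chain rule along a uniformly random ordering $\tau$ of the rows, averaging over $\tau$:
\[
\log N=\mathbb{E}_\tau\sum_i H\bigl(\sigma(i)\mid \sigma(j): j \text{ before } i \text{ in } \tau\bigr).
\]
For fixed $\sigma$ and $\tau$, the number of columns still available to row $i$ is at most $R_i(\sigma,\tau)$. Here $R_i(\sigma,\tau)$ counts the $k$ with $A_{i,k}=1$ such that the row $\sigma^{-1}(k)$ does not precede $i$ in $\tau$. Bounding each conditional entropy by the log of the support size gives
\[
H(\sigma(i)\mid\cdot)\le \mathbb{E}_{\sigma}\bigl[\log R_i\bigr],
\]
after conditioning on the past.

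The crucial observation is that, for fixed $\sigma$, the $r_i$ rows $\sigma^{-1}(k)$ with $A_{i,k}=1$ include $i$ itself. Under a random $\tau$, the position of $i$ among these $r_i$ rows is therefore uniform. Hence $R_i$ is uniform on $\{1,\dots,r_i\}$, and
\[
\mathbb{E}_\tau \log R_i=\frac{1}{r_i}\sum_{k=1}^{r_i}\log k=\frac{\log r_i!}{r_i}.
\]
Summing over $i$ gives
\[
\log N\le\sum_i \frac{\log (r_i!)}{r_i},
\]
which is the claimed bound.

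The main obstacle is justifying the interchange. We need the conditional entropy for a fixed ordering $\tau$ to be bounded by the expectation over $\sigma$ of $\log$ of the available count, and then the average over $\tau$ to be taken inside. This works because for each fixed $\tau$ the chain-rule inequality holds, since conditioning on the past determines the set of used columns. The average over $\tau$ of an identity is again an identity, so Fubini applies with no concavity loss.
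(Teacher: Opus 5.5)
The paper gives no proof of this statement. It quotes the Br\'egman--Minc inequality as a classical result and only applies it, in its bipartite form, to auxiliary graphs such as $D_{\mathbf{S}}$ and $D'_{\mathbf{S}}$. So your choice to cite it matches the paper, and your self-contained sketch is an additional route. The argument you actually carry out is Radhakrishnan's entropy proof; Schrijver's 1978 proof is instead an induction on $n$ driven by convexity of $x\log x$.

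Your argument is correct, for three reasons:
\begin{enumerate}
\item The conditional-entropy bound is legitimate because, for fixed $\tau$, the number $R_i$ of still-available columns is a function of the conditioning variables $(\sigma(j))_{j\prec_\tau i}$.
\item The rows $\sigma^{-1}(k)$ with $A_{i,k}=1$ are $r_i$ distinct rows that include $i$, since $\sigma$ is a bijection with $A_{i,\sigma(i)}=1$.
\item Because $\tau$ is uniform and independent of $\sigma$, the rank of $i$ among these rows, and hence $R_i$, is uniform on $\{1,\dots,r_i\}$.
\end{enumerate}
All sums are finite, so interchanging $\mathbb{E}_\tau$ and $\mathbb{E}_\sigma$ needs no justification.

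Two small points of housekeeping. First, a uniform $\sigma$ does not exist when $\operatorname{per}(A)=0$, so set that case aside first; the bound is trivial there. Second, the right-hand side tacitly requires every $r_i\ge 1$; a zero row forces $\operatorname{per}(A)=0$ anyway.

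Compared with the citation, your route is self-contained and handles unequal row sums directly. That is exactly the form the paper needs, for instance in Lemma~\ref{lem:imbalanced}, where the left degrees $S_v$ vary from vertex to vertex.
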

A useful equivalent statement of Theorem~\ref{thm:Bregman-Minc} is the following upper bound on the number of perfect matchings in the bipartite graph. Let $H$ be a bipartite graph on the bipartition $L \cup R$. Then the number of perfect matchings in $H$ is at most $\prod_{v\in L} (d_H(v)!)^{1/d_H(v)}$. This equivalence is directly verified by considering a bipartite adjacency matrix. 
Also, Theorem~\ref{thm:Bregman-Minc} is particularly useful to obtain an upper bound of $|\calF_2(G)|$, where $G$ is an $n$-vertex $2r$-regular graph since the permanent of adjacency matrix of $G$ at least $|\calF_2(G)|$. From Theorem~\ref{thm:Bregman-Minc}, we obtain the following proposition, which proves the upper bound of Theorem~\ref{thm:main}.

\begin{proposition}\label{prop:BM-upper}
    Let $G$ be an $n$-vertex $2r$-regular graph and $A_G$ be the adjacency matrix of $G$. Then we have
    \begin{equation*}
        |\calF_2(G)| \leq \operatorname{per}(A_G) \leq \left( (2r)! \right)^{\frac{n}{2r}}.
    \end{equation*}
\end{proposition}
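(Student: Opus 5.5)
The proof splits into two inequalities: an injection from $2$-factors into permutations counted by $\operatorname{per}(A_G)$, followed by Theorem~\ref{thm:Bregman-Minc} applied to $A_G$.

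\emph{First inequality.} Let $A_G$ be the $n\times n$ adjacency matrix of $G$, indexed by $V(G)$. Since $G$ is simple, $A_G$ is a symmetric $0$--$1$ matrix with zero diagonal. Every term $\prod_{v} (A_G)_{v,\sigma(v)}$ of $\operatorname{per}(A_G)$ is $0$ or $1$, so $\operatorname{per}(A_G)$ counts the permutations $\sigma$ of $V(G)$ with $v\sigma(v)\in E(G)$ for every $v$. I will map each $2$-factor of $G$ injectively to such a permutation. A $2$-factor $F\in\calF_2(G)$ is a vertex-disjoint union of cycles, each of length at least $3$ because $G$ is simple. Fix once and for all a cyclic orientation of each cycle of $F$, for instance by a deterministic rule from a fixed ordering of $V(G)$. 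Define $\sigma_F(v)$ to be the successor of $v$ on its cycle. Then $\sigma_F$ is a permutation, and $v\sigma_F(v)\in E(G)$ for all $v$, so $\sigma_F$ contributes $1$ to $\operatorname{per}(A_G)$. The edge set of $F$ is recovered as $\{v\sigma_F(v): v\in V(G)\}$, so $F\mapsto\sigma_F$ is injective. This gives $|\calF_2(G)|\le\operatorname{per}(A_G)$.

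\emph{Second inequality.} Every row sum of $A_G$ equals $\deg(v)=2r$. Theorem~\ref{thm:Bregman-Minc} then gives
\begin{equation*}
\operatorname{per}(A_G)\le\prod_{v\in V(G)}\bigl((2r)!\bigr)^{1/(2r)}=\bigl((2r)!\bigr)^{n/(2r)}.
\end{equation*}

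\emph{Connection to Theorem~\ref{thm:main}.} By Stirling's formula, $((2r)!)^{1/(2r)}=(1+o_r(1))\frac{2r}{e}$, so this proposition yields the upper bound in Theorem~\ref{thm:main}.

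There is no real obstacle here. The only point needing care is the injectivity of $F\mapsto\sigma_F$, which relies on every cycle having length at least $3$, guaranteed by simplicity. This rules out a $2$-cycle $v\,u\,v$ in $\sigma_F$, which would correspond to a doubled edge rather than a genuine $2$-factor.
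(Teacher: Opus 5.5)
Your proof is correct and follows the paper's argument: orienting each cycle of a $2$-factor gives an injection into the permutations counted by $\operatorname{per}(A_G)$, and Theorem~\ref{thm:Bregman-Minc} with all row sums equal to $2r$ gives the second inequality. One small correction to your closing remark: injectivity does not depend on the cycles having length at least $3$, since $\{v\sigma_F(v)\}$ recovers $E(F)$ regardless. Simplicity of $G$ is needed elsewhere, namely to make $A_G$ a $0$--$1$ matrix so that Br\'{e}gman--Minc applies.
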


We say a square matrix $A$ is \emph{doubly stochastic} if all elements of $A$ are nonnegative and all the row sums and column sums are equal to $1$. As a counterpart to Theorem~\ref{thm:Bregman-Minc}, Egorychev~\cite{Egorychev} and Falikman~\cite{Falikman} independently proved the following lower bound for the permanent of a doubly stochastic matrix, which was posed by van der Waerden~\cite{vanderWaerden}.

\begin{theorem}[Egorychev--Falikman theorem~\cite{Egorychev,Falikman}]\label{thm:egorychev-falikman}
    Let $A$ be an $n$ by $n$ doubly stochastic matrix. Then we have
    \begin{equation*}
        \operatorname{per}(A) \geq \frac{n!}{n^n} \geq e^{-n}.
    \end{equation*}
\end{theorem}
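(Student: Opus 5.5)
We prove the theorem (the van der Waerden bound) by Gurvits' capacity method, which turns the permanent into a mixed derivative of a real stable polynomial. For the doubly stochastic matrix $A$, set
\begin{equation*}
    p(x_1,\dots,x_n) \defeq \prod_{i\in[n]} \Big(\sum_{j\in[n]} A_{i,j} x_j\Big),
\end{equation*}
so that $\operatorname{per}(A) = \frac{\partial^n p}{\partial x_1\cdots\partial x_n}$, which is the coefficient of $x_1\cdots x_n$ in $p$. For a polynomial $q$ with nonnegative coefficients in $m$ variables, define the capacity $\operatorname{cap}(q) \defeq \inf_{x\in\mathbb{R}_{>0}^m} q(x)/(x_1\cdots x_m)$. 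Let $p_n = p$ and $p_{k-1} \defeq \partial p_k/\partial x_k \big|_{x_k=0}$. Then $p_0 = \operatorname{per}(A)$ is a constant, and $\operatorname{cap}(p_0)=p_0$. The plan has three steps: a lower bound on $\operatorname{cap}(p_n)$, a one-step inequality between consecutive capacities, and telescoping.

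\textbf{Step 1: $\operatorname{cap}(p)\ge 1$.} Each row of $A$ is a probability vector. By weighted AM--GM, $\sum_j A_{i,j}x_j \ge \prod_j x_j^{A_{i,j}}$ for $x>0$. Multiplying over $i$ and using that every column sum equals $1$ gives $p(x)\ge \prod_j x_j$.

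\textbf{Step 2: stability of the $p_k$.} The polynomial $p$ is a product of linear forms with nonnegative coefficients, so it is real stable: it has no zeros when all variables lie in the open upper half-plane. Real stability is preserved under taking $\partial/\partial x_k$ and under setting a variable to $0$, so every $p_k$ is real stable with nonnegative coefficients. We also need degree control. The degree of $p_k$ in $x_k$ is at most the number of factors of $p$ that survive the previous differentiations. Hence $\deg_{x_k} p_k \le k$.

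\textbf{Step 3, the main obstacle: the one-variable lemma.} We must show that if $q$ is real stable with nonnegative coefficients and $\deg_{x_k} q \le k$, then
\begin{equation*}
    \operatorname{cap}\big(\partial_{x_k} q|_{x_k=0}\big) \ge \Big(\tfrac{k-1}{k}\Big)^{k-1}\operatorname{cap}(q).
\end{equation*}
Fix positive values of the other variables. Then $t\mapsto q(\dots,t)$ is a univariate real-rooted polynomial with nonnegative coefficients, so all its roots are $\le 0$. Write it as $c\prod_{l\le d}(t+a_l)$ with $a_l\ge 0$ and $d\le k$. It then suffices to prove the univariate inequality
\begin{equation*}
    q'(0) \ge \Big(\tfrac{d-1}{d}\Big)^{d-1}\inf_{t>0} q(t)/t .
\end{equation*}
Since $q'(0)=c\big(\prod_l a_l\big)\sum_l 1/a_l$, we bound $\inf_{t>0} q(t)/t$ from above by choosing a suitable $t$. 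Put $s=\sum_l 1/a_l$ and take $t=(d-1)/s$, with the limiting cases handled separately. By AM--GM, $\prod_l(1+t/a_l)\le (1+ts/d)^d$. Substituting this choice of $t$ gives exactly the stated constant. Since $((d-1)/d)^{d-1}$ is decreasing in $d$, the bound with $d$ replaced by $k$ follows. Finally, take the infimum over the other variables.

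\textbf{Conclusion.} Telescoping Steps 1--3 gives
\begin{equation*}
    \operatorname{per}(A) \ge \prod_{k=1}^n \Big(\tfrac{k-1}{k}\Big)^{k-1} = \frac{n!}{n^n}.
\end{equation*}
The second inequality $n!/n^n\ge e^{-n}$ follows from $e^n=\sum_m n^m/m! \ge n^n/n!$.
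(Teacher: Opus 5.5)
The paper gives no proof of this statement. It quotes the result as a black box from Egorychev and Falikman, and remarks that only Friedland's weaker bound $e^{-n}$ is really used. Your route is Gurvits' capacity proof via real stable polynomials. That is different from the original arguments (Egorychev's goes through the Alexandrov--Fenchel inequality for mixed discriminants) and is a clean, self-contained way to reach $n!/n^n$.

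Steps 1 and 2 and the telescoping identity $\prod_{k=1}^n\bigl(\tfrac{k-1}{k}\bigr)^{k-1}=n!/n^n$ are fine. Two small remarks:
\begin{itemize}
\item For the degree bound, the simplest justification is that $p_k$ is homogeneous of degree $k$ in $x_1,\dots,x_k$.
\item The positivity of $\operatorname{cap}(p_k)$, obtained inductively, guarantees $p_{k-1}\not\equiv 0$, so real stability really is inherited.
\end{itemize}

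However, the one computation that produces the constant is wrong. With your choice $t=(d-1)/s$ you get $1+ts/d=(2d-1)/d$, and the AM--GM bound then only gives
\[
\inf_{t>0}\frac{q(t)}{t}\le c\Bigl(\prod_l a_l\Bigr)\,s\,\frac{(2d-1)^d}{(d-1)\,d^d},
\qquad\text{i.e.}\qquad
q'(0)\ge \frac{(d-1)\,d^d}{(2d-1)^d}\,\inf_{t>0}\frac{q(t)}{t}.
\]
This is strictly weaker than what you need. For $q(t)=(t+1)^2$, your $t=1/2$ gives $q(t)/t=9/2$, whereas the true infimum is $4$ (at $t=1$), so you get the constant $4/9$ instead of $1/2$. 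For large $d$ your constant decays like $d\,2^{-d}$, and telescoping it yields only a bound of order $2^{-\Theta(n^2)}$. So the claim that substituting this $t$ gives exactly $\bigl(\tfrac{d-1}{d}\bigr)^{d-1}$ is false, and as written the conclusion does not follow.

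The fix is to use the actual minimizer of $(1+ts/d)^d/t$, namely $t=\frac{d}{(d-1)s}$. Then $1+ts/d=\frac{d}{d-1}$, and
\[
\frac{q(t)}{t}\le c\Bigl(\prod_l a_l\Bigr)\,s\,\Bigl(\frac{d}{d-1}\Bigr)^{d-1}=\Bigl(\frac{d}{d-1}\Bigr)^{d-1}q'(0),
\]
which is precisely the required inequality. The case $d=1$ is the limit $t\to\infty$, and the cases with some $a_l=0$ are handled as you indicate. With this correction the proof is complete.
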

One can view Theorem~\ref{thm:egorychev-falikman} as a weighted count version of the folklore fact that every regular bipartite graph has a $1$-factor. See also~\cite{Schrijver-1-factor}.
We note that the weaker bound, $\operatorname{per}(A) \geq e^{-n}$, was proved by Friedland~\cite{Friedland} before Egorychev and Falikman. For convenience in the calculation, we frequently use this slightly weaker inequality.

Both Theorems~\ref{thm:Bregman-Minc} and~\ref{thm:egorychev-falikman} play an important role in enumerative combinatorics since many combinatorial objects can be reformulated in terms of permanents. One such example is Schrijver's theorem~\cite{Schrijver-Euler} on the number of Eulerian orientations of a given graph $G$, which is one of the key ingredients of the proof of Theorem~\ref{thm:main}.
An \emph{Eulerian orientation} of a graph $G$ is an orientation of the edges of $G$ such that each vertex has the same number of outgoing edges and incoming edges. Denote by $\calE(G)$ the set of oriented graphs whose underlying graph is $G$ and whose orientation induces an Eulerian orientation of $G$.
With an elegant argument, Schrijver established the following bounds. 

\begin{theorem}[Schrijver~\cite{Schrijver-Euler}]\label{thm:Schrijver-EO}
    Let $G$ be an $n$-vertex $2r$-regular graph. Then we have
    \begin{equation*}
        \left( 2^{-r} \binom{2r}{r} \right)^n \leq |\calE(G)| \leq \binom{2r}{r}^{\frac{n}{2}}.
    \end{equation*}
\end{theorem}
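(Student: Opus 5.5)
We prove the two bounds of Theorem~\ref{thm:Schrijver-EO} separately. The upper bound comes from Theorem~\ref{thm:Bregman-Minc} applied to a bipartite graph whose perfect matchings encode Eulerian orientations. The lower bound comes from Theorem~\ref{thm:egorychev-falikman}-type reasoning, via Schrijver's weight-averaging argument.

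\emph{Upper bound.} Build a bipartite graph $H$. Its left side $L$ consists of one copy of each vertex $v$ with multiplicity $r$, i.e.\ $r$ ``in-slots'' per vertex, so $|L| = rn$. Its right side is the edge set $E(G)$, with $|E(G)| = rn$. Join each in-slot of $v$ to every edge incident with $v$. A perfect matching of $H$ assigns to each vertex exactly $r$ incident edges as incoming, and every edge gets exactly one head. This is precisely an Eulerian orientation, counted $(r!)^n$ times because the slots are ordered. Each left vertex has degree $2r$, so by Theorem~\ref{thm:Bregman-Minc},
\[
|\calE(G)|\,(r!)^n \le \left((2r)!\right)^{\frac{rn}{2r}} = \left((2r)!\right)^{n/2}.
\]
Hence $|\calE(G)| \le \left((2r)!/(r!)^2\right)^{n/2} = \binom{2r}{r}^{n/2}$.

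\emph{Lower bound.} Here I would follow Schrijver's argument. Take a uniformly random orientation of $G$ among all $2^{rn}$ orientations. Alternatively, and more cleanly, set up a doubly stochastic matrix. In the same bipartite graph $H$, put weight $1/(2r)$ on every edge. Each left vertex then has weighted degree $2r \cdot \tfrac{1}{2r} = 1$. Each right vertex (an edge $uv$) is adjacent to $2r$ slots ($r$ from $u$ and $r$ from $v$), so its weighted degree is also $1$. The matrix is therefore doubly stochastic of size $rn$. Theorem~\ref{thm:egorychev-falikman} then gives
\[
\operatorname{per} \ge \frac{(rn)!}{(rn)^{rn}},
\]
and so the number of perfect matchings is at least $(2r)^{rn}\,(rn)!/(rn)^{rn}$. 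Dividing by $(r!)^n$ yields a bound of order $\left((2r)^r e^{-r}/r!\right)^n$. This is weaker than the target $\left(2^{-r}\binom{2r}{r}\right)^n$ by polynomial-in-$r$ factors per vertex.

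To get the exact bound, I would instead use Schrijver's local refinement. Use the lower bound $\operatorname{per}(A)\ge\prod_i$ (local factors), which holds for a doubly stochastic matrix with row blocks. Concretely, apply the Egorychev--Falikman-type inequality to the $r$ identical rows of each vertex block: Schrijver's inequality states that the permanent is at least $\prod_{v}\frac{r!}{r^r}\cdots$. Alternatively, argue via the probabilistic version. Orient each edge uniformly at random; the probability that vertex $v$ is balanced is exactly $2^{-2r}\binom{2r}{r}$. One then shows that these local events are ``positively correlated enough'' that
\[
\Pr[\text{all balanced}] \ge \prod_v 2^{-r}\binom{2r}{r}\cdot 2^{-r},
\]
which gives $|\calE(G)| \ge 2^{rn}\prod_v\left(2^{-2r}\binom{2r}{r}\right)$, exactly the claim.

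\emph{Main obstacle.} The main obstacle is this correlation step. My plan is to realize it as a mixed-discriminant (Gurvits capacity) bound. Write
\[
|\calE(G)| = [\textstyle\prod_v x_v^{r}] \prod_{uv}(x_u+x_v),
\]
the coefficient of $\prod_v x_v^r$ in a real-stable polynomial. The polynomial $\prod_{uv}(x_u+x_v)$ is real stable with degree $2r$ in each variable $x_v$. Gurvits' capacity inequality then gives
\[
\text{coefficient} \ge \operatorname{cap}\cdot\prod_v \frac{r^r}{r!}\cdot\frac{r!}{r^r}\cdot\ldots,
\]
with the sharp per-variable factor $\binom{2r}{r}r^r/(2r)^{r}\cdot\ldots$ arising from the degree-$2r$ term. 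Evaluating at $x\equiv 1$ shows that the capacity is at least $\inf_x \prod_{uv}(x_u+x_v)/\prod_v x_v^r \ge 2^{rn}$, by AM--GM: $x_u+x_v\ge 2\sqrt{x_ux_v}$. Combining these two facts gives the stated lower bound.
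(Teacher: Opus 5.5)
Your upper bound is correct and is exactly the argument the paper alludes to. The paper only cites Schrijver here, noting that the upper bound is Br\'egman--Minc applied to the vertex--edge incidence matrix with each vertex row duplicated, and that the lower bound comes from an inductive vertex-splitting argument.

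The lower bound, however, is not actually proved. There are three problems.
\begin{itemize}
\item Your Egorychev--Falikman attempt loses a constant factor per vertex: the ratio $\frac{(2r)!}{r!\,(4r/e)^r}$ tends to $\sqrt2$, so the loss is not polynomial in $r$. Since the statement is exact, any per-vertex loss is fatal. It would also break Lemma~\ref{lem:k-loose-extension}, which needs ratio $2-o(1)$ per vertex.
\item Your ``positive correlation'' inequality $\Pr[\text{all balanced}]\ge\prod_v\Pr[v\text{ balanced}]$ is just a restatement of the theorem.
\item Your capacity step is not a usable inequality as written. The displayed factor $\frac{r^r}{r!}\cdot\frac{r!}{r^r}\cdots$ is garbled. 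The standard Gurvits inequality (for the multilinear coefficient) also does not suffice. Polarizing each $x_v$ into $r$ slot variables and applying it essentially reproduces the Schrijver bound for perfect matchings in the $2r$-regular bipartite graph $H$. That gives the per-vertex factor $\frac{1}{r!}\bigl((2r-1)^{2r-1}/(2r)^{2r-2}\bigr)^r$, which is below $2^{-r}\binom{2r}{r}$: already $729/512<3/2$ at $r=2$, and asymptotically it is smaller by the factor $e^{1/4}/\sqrt2$.
\end{itemize}

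What is missing is the coefficient bound for arbitrary exponents. If $p$ is real stable with nonnegative coefficients and $\deg_{x_v}p\le\lambda_v$, then
\[
[x^\alpha]\,p\;\ge\;\prod_v\binom{\lambda_v}{\alpha_v}\frac{\alpha_v^{\alpha_v}(\lambda_v-\alpha_v)^{\lambda_v-\alpha_v}}{\lambda_v^{\lambda_v}}\cdot\inf_{x>0}\frac{p(x)}{x^\alpha}.
\]
This follows by extracting one variable at a time from the univariate real-rooted case, where $(1+t)^\lambda$ is extremal. With $\lambda_v=2r$ and $\alpha_v=r$ the factor is $4^{-r}\binom{2r}{r}$. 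Your AM--GM computation then gives capacity at least $2^{rn}$; evaluation at $x\equiv1$ only gives the matching upper bound. So once this theorem is stated and proved or cited, your route does give the claim, at the cost of real-stability machinery.

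The elementary route the paper points to is Schrijver's splitting-off induction on the number of edges. It is run over all multigraphs with loops and even degrees, for the bound $\prod_v 2^{-d(v)/2}\binom{d(v)}{d(v)/2}$.
\begin{itemize}
\item At a vertex $v$ of degree $2k$, splitting off a pair $\{e,f\}$ of edge-ends at $v$ gives a graph $G_{ef}$. Here $|\calE(G_{ef})|$ is at most the number of Eulerian orientations of $G$ in which $e,f$ are oppositely directed at $v$.
\item Each Eulerian orientation has $k^2$ such pairs, so $k^2|\calE(G)|\ge\binom{2k}{2}\min_{\{e,f\}}|\calE(G_{ef})|$.
\item Finally, $\binom{2k}{2}/k^2=\frac{2k-1}{k}$ is exactly the ratio $2^{-k}\binom{2k}{k}\big/\bigl(2^{-(k-1)}\binom{2k-2}{k-1}\bigr)$, which closes the induction.
\end{itemize}
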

The upper bound follows from the Brégman–Minc inequality, while an inductive vertex-splitting argument proves the lower bound.
For more results on the number of Eulerian orientations, we refer the readers to~\cite{Csikvari-Schweitzer,Borbenyi-Csikvari,BBMC,Isaev,LasVergnas,Isaev-McKay-Zhang}.


\section{Eulerian orientations after adding $2$-factors}

The key ingredient in our proof of Theorem~\ref{thm:main} is a lower bound on the ratio of the numbers of Eulerian orientations of two nested regular graphs. The following theorem shows that adding an edge-disjoint $2$-factor to an $n$-vertex $2(r-1)$-regular graph increases the number of Eulerian orientations by a factor of at least $(2-o_r(1))^n$, which may be of independent interest.

\begin{theorem}\label{thm:ratio-EO}
    For every real number $\ve > 0$, there exists $r_0 > 1$, depending only on $\ve$, such that the following holds for all $r \geq r_0$. Let $G$ and $G'$ be $n$-vertex $2(r+1)$- and $2r$-regular graphs, respectively, where $G'$ is a subgraph of $G$. Then we have
    $$
        \frac{|\calE(G)|}{|\calE(G')|} \geq (2 - \ve)^n.
    $$
\end{theorem}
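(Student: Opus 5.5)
The plan is to write $F \defeq E(G)\setminus E(G')$, which is a $2$-factor of $G$, and to compare orientations of $G$ with orientations of $G'$ through the imbalance they must absorb. For a vector $d\in \mathbb{Z}^{V}$, let $N_{G'}(d)$ be the number of orientations of $G'$ in which every vertex $v$ satisfies $d^+(v)-d^-(v)=d(v)$; thus $N_{G'}(0)=|\calE(G')|$. Every orientation $\ori{F}$ of $F$ gives each vertex a net degree $d_{\ori F}(v)\in\{-2,0,2\}$. An orientation of $G$ is Eulerian exactly when its restriction to $G'$ has imbalance $-d_{\ori F}$, so
\begin{equation*}
|\calE(G)| \;=\; \sum_{\ori F} N_{G'}\bigl(-d_{\ori F}\bigr),
\end{equation*}
where the sum runs over all $2^{n}$ orientations of $F$. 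It therefore suffices to prove a \emph{stability estimate}: for every $d\in\{-2,0,2\}^V$ with $\sum_v d(v)=0$,
\begin{equation*}
N_{G'}(d)\;\ge\;(1-\ve/2)^{n}\,N_{G'}(0)
\end{equation*}
once $r\ge r_0(\ve)$. Summing over the $2^n$ orientations of $F$ then gives the theorem. The heuristic reason this should hold is that at a single vertex, moving the out-degree from $r$ to $r\pm1$ changes the local count by $\binom{2r}{r\pm1}/\binom{2r}{r}=1-O(1/r)$. So the per-vertex loss should be $1-O(1/r)$, which is below $1-\ve/2$ for $r$ large. Note also that no estimate allowed to lose $\mathrm{poly}(r)^n$ will work, since the ratio being compared is only about $2^n$. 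In particular, plugging the two sides of Theorem~\ref{thm:Schrijver-EO} into each other is useless, as is a Gurvits-capacity lower bound on coefficients of $\prod_{uv}(x_u+x_v)$.

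To prove the stability estimate I would use a switching argument. Let $S_+$ and $S_-$ be the vertices with $d(v)=2$ and $d(v)=-2$; they have equal size $m$. Fix any bijection $\pi\colon S_-\to S_+$ chosen along the cycles of $F$, so that each paired $u$ and $\pi(u)$ are at bounded distance in $F$ (alternatively, take the pairing built greedily). Start from an Eulerian orientation $\ori{G'}$. For each $u\in S_-$, reverse a directed path of $\ori{G'}$ from $\pi(u)$ to $u$, where the paths for different pairs must be edge-disjoint. This raises the net degree of $\pi(u)$ by $2$, lowers that of $u$ by $2$, and leaves interior vertices unchanged, so it produces an orientation counted by $N_{G'}(d)$.

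Double counting then gives
\begin{equation*}
N_{G'}(d)\cdot(\text{max number of preimages})\;\ge\;N_{G'}(0)\cdot(\text{min number of admissible path systems}).
\end{equation*}
The number of preimages of a target orientation is at most the number of ways to select the reversed path system inside it. If the paths are restricted to length at most $\ell$, this count is at most $(2r)^{O(\ell)}$ per pair. The lower bound on admissible path systems must match this up to $(1-O(1/r))$ per vertex, which is why the per-vertex accounting has to be sharp.

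The main obstacle is exactly this sharpness: crude switching loses a factor polynomial in $r$ per vertex. What I would try first is to make the switching local and nearly bijective. Within each cycle of $F$, pair consecutive vertices $u,\pi(u)$ that are adjacent in $G$ through a path of length $2$ in $G'$. For a fixed Eulerian $\ori{G'}$, a random out-neighbour $w$ of $\pi(u)$ is also an in-neighbour of $u$ with probability bounded below only when common neighbourhoods are large, which fails in general, so this naive version does not suffice.

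The fallback is to run Schrijver's vertex-splitting induction from Theorem~\ref{thm:Schrijver-EO} directly with prescribed imbalances. Split vertices one at a time into pieces of degree $2$, and at each step compare the splitting ratio for imbalance $0$ with the ratio for imbalance $\pm2$. In Schrijver's argument the ratio at a vertex is an average of $\binom{2r}{r}$-type local quantities. With imbalance $\pm 2$ the corresponding local quantity is $\binom{2r}{r\pm1}$, and an averaging (convexity) step should give a per-vertex comparison factor of $\binom{2r}{r+1}/\binom{2r}{r}\ge 1-2/r$. If the induction can be arranged so that the same sequence of splittings serves both $d$ and $0$, this yields $N_{G'}(d)\ge(1-2/r)^nN_{G'}(0)$, and choosing $r_0$ with $2/r_0<\ve/2$ completes the proof.
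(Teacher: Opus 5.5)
\textbf{There is a genuine gap: your reduction is to a false statement.} The identity $|\calE(G)|=\sum_{\ori{F}}N_{G'}(-d_{\ori{F}})$ is correct. But the pointwise stability estimate $N_{G'}(d)\ge(1-\ve/2)^n N_{G'}(0)$ fails even for $d$ of the form $-d_{\ori{F}}$, because $N_{G'}(d)$ can vanish. For instance, let $G'$ be two disjoint copies of $K_{2r+1}$, and let $F$ be a Hamiltonian cycle of the complement $K_{2r+1,2r+1}$. Orient every edge of $F$ from the first copy $C$ to the second. Then $\sum_{v\in C}d_{\ori{F}}(v)=2(2r+1)\neq 0$, whereas every orientation of $G'$ has total net degree $0$ on $C$. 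Hence $N_{G'}(-d_{\ori{F}})=0<N_{G'}(0)$.

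Connectivity does not help. Suppose $G'$ is $2r$-regular with two large parts $A,B$ joined by exactly two edges, and $F$ crosses $(A,B)$ at least four times. Orienting the crossing edges of $F$ from $A$ to $B$ forces net degree at most $-4$ on $A$, which $G'$ cannot supply. So $N_{G'}(-d_{\ori{F}})=0$ again. Consequently no switching or splitting argument can prove your estimate. Any correct proof must average over the orientations of $F$, or over some other family, because the incompatible $\ori{F}$ are compensated only globally.

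Even for an averaged version, neither of your tools reaches the required $(1+o_r(1))^n$ precision. You already note that the switching fails. In the splitting fallback, Schrijver's induction gives $|\calE|\ge(\text{local factor})\cdot\min$ over splittings. With prescribed imbalances this minimum is $0$: choose the pairings so that some closed trail passes through exactly one source transition and no sink transition, and it has no compatible orientation. Using the exact average instead gives $N_{G'}(0)=\bigl(2^{-r}\binom{2r}{r}\bigr)^n\,\mathbb{E}_P\bigl[2^{c(P)}\bigr]$ over uniformly random transition systems $P$. Any comparison therefore needs $\mathbb{E}_P[2^{c(P)}]\le(1+o_r(1))^n$, i.e.\ an upper bound on $|\calE(G')|$ matching Schrijver's lower bound, and you have no such bound for a general $G'$.

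This is exactly the missing idea the paper supplies. It fixes $k$ large and passes to $k$-loose $2k$-factors $H$ of $G'$. For such $H$, every transition system has $c(P)\le 2n/k$, so $|\calE(H)|\le 2^{2n/k}\bigl(2^{-k}\binom{2k}{k}\bigr)^n$, and Schrijver's lower bound for $H\cup F$ yields the gain $(2-\ve)^n$ at the level of $H$. The paper then transfers this gain to $(G',G)$ by double counting $X=\sum_H|\calE(G'\setminus H)||\calE(H)|$ and $Y=\sum_H|\calE(G'\setminus H)||\calE(H\cup F)|$:
\begin{itemize}
\item $X$ is bounded from below by Egorychev--Falikman together with the abundance of $k$-loose factors;
\item $Y$ is bounded from above by the Br\'{e}gman--Minc bound of Lemma~\ref{lem:imbalanced}.
\end{itemize}
In particular, the imbalance $t_v$ is only ever counted from above there, which is the easy direction. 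Your plan instead requires a lower bound on imbalanced counts.
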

Observe that a direct application of Theorem~\ref{thm:Schrijver-EO} does not suffice to prove Theorem~\ref{thm:ratio-EO} since the quantity $\left( 2^{-(r+1)} \binom{2(r+1)}{r+1}\right)^n / \binom{2r}{r}^{n/2}$ tends to $0$ as $n$ tends to infinity. The reason why naive application of Theorem~\ref{thm:Schrijver-EO} cannot work is that the structures of the extremal examples of the upper and lower bounds in Theorem~\ref{thm:Schrijver-EO} are significantly different. However, $G'$ is a subgraph of $G$ and also $G \setminus G'$ is a disjoint union of cycles, which is a very sparse graph. In this intuition, we use this structural information to prove Theorem~\ref{thm:ratio-EO}, together with Theorem~\ref{thm:Schrijver-EO} and the permanent inequalities discussed in Section~\ref{sec:permanents}.

In more detail, adding a spanning $2$-factor introduces $n$ new edges, so one might expect the number of orientations to increase by roughly $2^n$. The difficulty is that every vertex must remain balanced, which may require adjusting the directions of some original edges as well. We allow these adjustments inside a carefully chosen spanning subgraph of degree $2k$, where $k$ is large but fixed while the ambient degree $2r$ tends to infinity. The large ambient degree gives us enough freedom to choose many such subgraphs with few short cycles. Their simpler local structure allows us to estimate the cost of maintaining balance and recover almost a factor of two per vertex. Summing these comparisons over many choices of the subgraph, while controlling how often each orientation is counted, then transfers this gain to the original graphs. Thus, we first choose $k$ large enough to make the loss per vertex small, and then take $r$ sufficiently large to supply the required auxiliary subgraphs.

From now on, when we claim a statement holds if $0 < \alpha \ll \beta_1, \beta_2 \ll \gamma < 1 $, it means that there exist functions $f$ and $g$ such that the statement holds whenever $\beta_1, \beta_2 < f(\gamma)$ and $\alpha < g(\beta_1, \beta_2)$. We will not compute those functions explicitly. Also, we fix a constant $A_{r, k} \defeq \frac{(kr/e)^k}{(k!)^2}$.


\subsection{Counting oriented graphs via permanents}\label{subsec:count-ori-graphs}

Before proving Theorem~\ref{thm:ratio-EO}, we collect several useful lemmas that we will use later.
We start with the following sharp lower bound on the number of $k$-factors in a given $r$-regular oriented graph.

\begin{lemma}\label{lem:k-factor-lower}
    Fix parameters as
    $$
        0 < \frac{1}{r} \ll \frac{1}{k} \ll \ve < 1.
    $$
    Let $D$ be an $n$-vertex $r$-regular oriented graph. Let $r_0=k\lfloor r/k\rfloor$ and $s=r_0/k$, and write $(a)_k=a(a-1)\cdots(a-k+1)$. Then
    \begin{equation}\label{eq:factor-lower-exact}
        |\calF_k(D)|\geq \left(\frac{(r_0)_k^2}{(k!)^2(e s)^k}\right)^n.
    \end{equation}
    In particular,
    \begin{equation*}\label{eq:factor-lower-explicit}
        |\calF_k(D)| \geq \left((1-\ve)A_{r,k}\right)^n.
    \end{equation*}
\end{lemma}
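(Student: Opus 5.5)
We encode $k$-factors of $D$ as perfect matchings of an auxiliary bipartite graph, bound those matchings from below with Theorem~\ref{thm:egorychev-falikman}, and then average over random refinements.

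\textbf{Setup.} Let $B$ be the bipartite graph with parts $L=\{u^+ : u\in V(D)\}$ and $R=\{v^- : v\in V(D)\}$, with an edge $u^+v^-$ for every arc $uv$ of $D$. Since $D$ is $r$-regular, $B$ is $r$-regular. Spanning subdigraphs of $D$ in which every vertex has in- and out-degree $k$ correspond bijectively to $k$-regular spanning subgraphs of $B$, so it suffices to count the latter. By the standard $1$-factorization of regular bipartite graphs (König), $B$ is the union of $r$ edge-disjoint perfect matchings. Keeping $r_0=k\lfloor r/k\rfloor$ of them gives an $r_0$-regular spanning subgraph $B_0\subseteq B$. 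We lower-bound the number of $k$-factors of $B_0$.

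\textbf{Splitting and double counting.} For each vertex $x$ of $B_0$, choose an ordered partition $\pi_x$ of the $r_0$ edges at $x$ into $k$ classes of size $s$. Given $\pi=(\pi_x)_x$, form $B_\pi$ by replacing $x$ with clones $x_1,\dots,x_k$, where $x_i$ receives the edges of the $i$-th class. An edge $xy$ of $B_0$ becomes the single edge $x_iy_j$ determined by its classes at the two ends. Then $B_\pi$ is an $s$-regular bipartite graph with $kn$ vertices on each side. By Theorem~\ref{thm:egorychev-falikman} applied to $\frac1s$ times its biadjacency matrix, it has at least $(s/e)^{kn}$ perfect matchings. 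Each perfect matching of $B_\pi$ projects to a set of edges of $B_0$ using exactly one edge from each class at every vertex. Because every edge of $B_0$ lies in exactly one class at each endpoint, this projection is injective and gives a $k$-factor of $B_0$ (no multiplicities arise). Conversely, a fixed $k$-factor $F$ arises from $\pi$ exactly when, at every vertex $x$, the $k$ edges of $F$ at $x$ lie in distinct classes of $\pi_x$.

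\textbf{Counting compatible partitions.} At a single vertex there are $r_0!/(s!)^k$ ordered partitions. The number in which $k$ prescribed edges fall into distinct classes is $k!\,(r_0-k)!/((s-1)!)^k$. So the compatible fraction is $k!\,s^k/(r_0)_k$ per vertex, independently over the $2n$ vertices of $B_0$. Summing perfect matchings over all $\pi$ and counting each $F$ once per compatible $\pi$ gives
\[
|\calF_k(D)|\Bigl(\tfrac{k!\,s^k}{(r_0)_k}\Bigr)^{2n}\ \ge\ \Bigl(\tfrac{s}{e}\Bigr)^{kn},
\]
which rearranges exactly to \eqref{eq:factor-lower-exact}.

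\textbf{Asymptotic form.} It remains to compare with $A_{r,k}$. We have $r_0\ge r-k$ and $(r_0)_k\ge (r-2k)^k$, while $s\le r/k$. Hence the base in \eqref{eq:factor-lower-exact} is at least
\[
\frac{(r-2k)^{2k}\,k^k}{(k!)^2\,e^k\,r^k}=A_{r,k}\,(1-2k/r)^{2k}.
\]
Since $1/r\ll 1/k$, the factor $(1-2k/r)^{2k}$ is at least $1-\ve$.

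The only genuinely delicate step is the injectivity/simplicity of the projection. This is exactly why we split the edges at each vertex among clones, rather than blowing each edge up into a $K_{k,k}$, which would create repeated edges. The rest is the routine partition count above.
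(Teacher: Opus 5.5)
Your proof is correct and follows the paper's argument essentially step for step. You split the edges at each vertex of the $r_0$-regular bipartite representation into $k$ labelled classes of size $s$, apply Egorychev--Falikman to the resulting $s$-regular bipartite graph, and average over partitions using the per-vertex compatibility fraction $k!\,s^k/(r_0)_k$. The differences are cosmetic: you reduce to $r_0$ via K\"onig before splitting rather than after, and you phrase the averaging as a double count rather than an expectation. You also supply the routine estimate giving the factor $(1-2k/r)^{2k}\ge 1-\ve$ relative to $A_{r,k}$, which the paper omits.
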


We note that by iteratively applying the Br\'{e}gman--Minc inequality and the Egorychev--Falikman theorem, one could obtain a weaker lower bound, say $(1 - \ve)^{kn} A_{r, k}^n$. However, the latter argument requires a subexponential error bound that does not depend on $k$, as in the statement of Lemma~\ref{lem:k-factor-lower}. To this end, we partition the edge set of $D$ cleverly and avoid using the Br\'{e}gman--Minc inequality in the proof of Lemma~\ref{lem:k-factor-lower}, which would make the error term larger than our purpose.

We remark that Lemma~\ref{lem:k-factor-lower} can be obtained from a result of Csikv\'{a}ri and Schweitzer~\cite[Theorem 1.4]{Csikvari-Schweitzer}, but for the completeness of the paper, we include the proof here.

\begin{proof}[Proof of Lemma~\ref{lem:k-factor-lower}]
    First suppose that $k$ divides $r$, and write $s=r/k$. For every vertex $v$, partition its $r$ outgoing edges uniformly into $k$ labelled groups, each of size $s$. Independently partition its $r$ incoming edges in the same way. We make all $2n$ partitions independently, and denote their collection by $\pi$.

    Consider an auxiliary bipartite graph $B_\pi$ with left vertices $(v,i)_L$ and right vertices $(v,i)_R$, for $v\in V(D)$ and $1\leq i\leq k$. A directed edge $\ori{vw}$ becomes the edge from $(v,i)_L$ to $(w,j)_R$ if it belongs to outgoing group $i$ at $v$ and incoming group $j$ at $w$. We note that each directed edge of $D$ becomes exactly one edge and the auxiliary bipartite graph $B_\pi$ is $s$-regular, with $kn$ vertices in each class. Thus, its adjacency matrix $M_\pi$ satisfies
    \begin{equation}\label{eq:Mpi-EF}
        \operatorname{per}(M_\pi)\ge (s/e)^{kn}
    \end{equation}
    by the Egorychev--Falikman Theorem (Theorem~\ref{thm:egorychev-falikman}) since $\operatorname{per}(M_{\pi})$ is the same with the number of perfect matchings of $B_{\pi}$.
    Observe that a perfect matching of $B_\pi$ projects to a $k$-factor of $D$. In the opposite direction, a fixed $k$-factor occurs in this way precisely when its selected edges occupy all $k$ groups once at each end; in that case, it gives exactly one perfect matching. 
    For any one of the $2n$ partitions, the probability of this event for a prescribed vertex $v$ is
    $$
        \rho =\frac{k!(r-k)!/((s-1)!)^k}{r!/(s!)^k} =\frac{k!s^k}{(r)_k}.
    $$
    We computed $\rho$ by assigning the $k$ selected arcs bijectively to the labelled groups, then distributing the remaining arcs with $s-1$ in each group.
    Then independence and linearity of expectation now give the exact identity
    $$
        \mathbb{E}_\pi[\operatorname{per}(M_\pi)]=|\calF_k(D)|\rho^{2n}.
    $$
    Consequently, the above identity and \eqref{eq:Mpi-EF} imply
    \[
        |\calF_k(D)|\ge (s/e)^{kn}\rho^{-2n} =\left(\frac{(r)_k^2}{(k!)^2(e s)^k}\right)^n.
    \]

    For general $r$, we note that every $r$-regular oriented graph has a $1$-factorization, which is a classical result in graph theory. Then we arbitrarily choose $r_0$ such $1$-factors of $D$ and apply the divisible case to them. Then we obtain the desired inequality \eqref{eq:factor-lower-exact}.
    Finally, the lower bound $\left((1 - \ve)A_{r, k}\right)^n$ is followed by a simple numerical calculation under the condition $0< 1/r \ll 1/k \ll \ve$, so we omit the detail.
    This completes the proof.
\end{proof}

By using Lemma~\ref{lem:k-factor-lower}, we prove the following lemma.

\begin{lemma}\label{lem:edge-contain-prob}
    Fix parameters as
    $$
        0 < \frac{1}{r} \ll \frac{1}{k} \ll \ve < 1.
    $$
    Let $D$ be an $n$-vertex $r$-regular oriented graph. Let $S$ be a subdigraph of $D$ and let $F$ be a $k$-factor of $D$ that is chosen from $\calF_k(D)$ uniformly at random. Then we have
    \begin{equation*}
        \mathbb{P}(E(S)\subseteq E(F)) \leq (1 + \ve)^n \left( \frac{ek}{r} \right)^{|E(S)|}.
    \end{equation*}
\end{lemma}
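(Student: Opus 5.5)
The plan is to write the probability as a ratio and bound the numerator and the denominator separately. By definition,
$$
\mathbb{P}(E(S)\subseteq E(F)) = \frac{|\{F\in\calF_k(D): E(S)\subseteq E(F)\}|}{|\calF_k(D)|}.
$$
Lemma~\ref{lem:k-factor-lower} already gives the denominator bound $|\calF_k(D)|\ge ((1-\ve')A_{r,k})^n$. The remaining work is an upper bound on the numerator of the form $(1+\ve')^n A_{r,k}^n (ek/r)^{|E(S)|}$. If some vertex has out- or in-degree larger than $k$ in $S$, the numerator is zero and there is nothing to prove, so I assume $S$ has maximum in- and out-degree at most $k$.

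To bound the numerator, I encode $k$-factors containing $S$ as perfect matchings and apply the Br\'egman--Minc inequality (Theorem~\ref{thm:Bregman-Minc}). Let $d^+_v$ and $d^-_v$ be the out- and in-degrees of $v$ in $S$. Form a bipartite graph $H$ with left copies $(v,i)_L$ for $1\le i\le k-d^+_v$ and right copies $(w,j)_R$ for $1\le j\le k-d^-_w$. Join $(v,i)_L$ to $(w,j)_R$ whenever $\ori{vw}\in E(D)\setminus E(S)$. Each $k$-factor $F\supseteq S$ corresponds to exactly $\prod_v (k-d^+_v)!\,(k-d^-_v)!$ perfect matchings of $H$, obtained by labelling the arcs of $F\setminus S$ at each end. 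A perfect matching may use a pair of copies of the same arc twice; this only overcounts, so it does no harm to an upper bound.

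Each left copy of $v$ has degree at most $(k-d^-)\cdot r$ roughly; more precisely, its degree is $\sum_{w} (k-d^-_w)$ over out-neighbours $w$, which is at most $kr$. Theorem~\ref{thm:Bregman-Minc} then gives
$$
\#\{F\supseteq S\}\le \prod_v \frac{((kr)!)^{(k-d^+_v)/(kr)}}{(k-d^+_v)!\,(k-d^-_v)!}.
$$
By Stirling's formula, $((kr)!)^{1/(kr)}=(1+o(1))kr/e$. So the factor at $v$ is about $(kr/e)^{k-d^+_v}/((k-d^+_v)!(k-d^-_v)!)$. Comparing this with $A_{r,k}=(kr/e)^k/(k!)^2$, the ratio at $v$ is
$$
(e/(kr))^{d^+_v}\cdot\frac{k!}{(k-d^+_v)!}\cdot\frac{k!}{(k-d^-_v)!} \le (e/(kr))^{d^+_v}k^{d^+_v}k^{d^-_v}.
$$
Taking the product over $v$ and using $\sum_v d^+_v=\sum_v d^-_v=|E(S)|$ gives $(e/(kr))^{|E(S)|}k^{2|E(S)|}=(ek/r)^{|E(S)|}$. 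This is exactly the required edge factor.

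The main obstacle is the error term. The Stirling error $(1+o(1))^{kn}$ and the factor $(1+o(1))^{kn}$ coming from $(kr)!$ per copy must be uniform in $k$, i.e.\ the total loss must be $(1+\ve)^n$ and not $(1+\ve)^{kn}$. The per-vertex loss is roughly $(\sqrt{2\pi kr})^{k/(kr)}=(2\pi kr)^{1/(2r)}$, which is $1+O(\log(kr)/r)$. This is small because $1/r\ll 1/k$, so it is controlled, and the precise lower bound in \eqref{eq:factor-lower-exact} matches $A_{r,k}$ up to a factor $(1-\ve)^n$. Choosing $\ve'$ small in terms of $\ve$ then finishes the proof. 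If the degree bound $kr$ turns out to be too crude, I would instead refine the left degrees using $r-d^+_v$ arcs; this only helps.
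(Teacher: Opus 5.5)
Your proposal is correct and follows the paper's proof essentially step by step. The ingredients match: the auxiliary bipartite graph with $k-d^{\pm}_S(v)$ copies of each vertex (you additionally delete the arcs of $S$, which is harmless since $F\setminus S$ avoids them), Br\'egman--Minc with all left degrees at most $kr$, the Stirling error $\exp(O(n\log(kr+1)/r))$ that is uniform in $k$, the bound $k!/(k-d)!\le k^{d}$, and finally division by the lower bound of Lemma~\ref{lem:k-factor-lower}. The one step you leave implicit is replacing each left degree $d\le kr$ by $kr$ in $(d!)^{1/d}$; this is justified because $(m!)^{1/m}$ is nondecreasing, which the paper states explicitly.
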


To prove Lemma~\ref{lem:edge-contain-prob}, we estimate the upper bound on the number of $k$-factors in $D$ that contain all edges of $S$ by using the Br\'{e}gman--Minc inequality. For this, we define the following auxiliary bipartite graph.

\begin{definition}
    Let $D$ be an oriented graph and let $\mathbf{S} = \{(a_v, b_v)\}_{v\in V(D)}$ be a collection of pairs of non-negative integers $a_v$ and $b_v$ such that $\sum_{v\in V(D)} a_v = \sum_{v\in V(D)} b_v$. 
    Define an auxiliary bipartite graph $D_{\mathbf{S}}$ on a bipartition $L \cup R$ such that $L$ consists of $a_v$ copies of $v$ and $R$ consists of $b_v$ copies of $v$ for all $v\in V(D)$, and join every $L$-side copy of $v$ and $R$-side copy of $w$ whenever $\ori{vw}$ is an arc of $D$.
\end{definition}

We note that each auxiliary bipartite graph $D_{\mathbf{S}}$ counts the spanning subdigraph of $D$ whose out- and in-degrees of $v$ are $a_v$ and $b_v$, respectively, with multiplicity $\prod_{v\in V(D)} a_v! b_v!$. Also, each such subdigraph is counted in at least one perfect matching of $D_{\mathbf{S}}$; we have the following proposition. We denote by $\operatorname{pm}(H)$ the number of perfect matchings of a given graph $H$.

\begin{proposition}\label{prop:aux-perf-matching}
    Let $D$ be an oriented graph and let $\mathbf{S} = \{(a_v, b_v)\}_{v\in V(D)}$ be a collection of pairs of non-negative integers $a_v$ and $b_v$ such that $\sum_{v\in V(D)} a_v = \sum_{v\in V(D)} b_v$. Then the number of subdigraphs of $D$ whose out- and in-degrees of $v$ are $a_v$ and $b_v$, respectively is at most
    $$
        \frac{\operatorname{pm}(D_{\mathbf{S}})}{\prod_{v\in V(D)} a_v! b_v!}.
    $$
\end{proposition}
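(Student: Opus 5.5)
The plan is to build an explicit map from perfect matchings of $D_{\mathbf{S}}$ onto the subdigraphs with the prescribed degree sequence, show every such subdigraph is hit exactly $\prod_{v} a_v!\,b_v!$ times, and read off the bound; along the way we must also dispose of the matchings that do not correspond to any valid subdigraph. Write $\calH_{\mathbf{S}}$ for the set of spanning subdigraphs $H$ of $D$ with $d^+_H(v)=a_v$ and $d^-_H(v)=b_v$ for every $v$. It suffices to exhibit, for each $H\in\calH_{\mathbf{S}}$, a set $M(H)$ of exactly $\prod_{v} a_v!\,b_v!$ perfect matchings of $D_{\mathbf{S}}$, with the sets $M(H)$ pairwise disjoint. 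Summing over $H$ then gives $|\calH_{\mathbf{S}}|\prod_v a_v!\,b_v!\le \operatorname{pm}(D_{\mathbf{S}})$, which is the claim.

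Fix $H\in\calH_{\mathbf{S}}$. At each vertex $v$, the $a_v$ out-arcs of $H$ at $v$ can be assigned bijectively to the $a_v$ left copies of $v$ in $a_v!$ ways. Likewise the $b_v$ in-arcs of $H$ at $v$ can be assigned bijectively to the $b_v$ right copies of $v$ in $b_v!$ ways. Given such a choice at every vertex, each arc $\ori{vw}\in E(H)$ becomes an edge between a specific left copy of $v$ and a specific right copy of $w$. This edge exists in $D_{\mathbf{S}}$ because $\ori{vw}$ is an arc of $D$. Since every copy is used exactly once, the resulting edges form a perfect matching of $D_{\mathbf{S}}$.

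Different choices of bijections give different matchings. Indeed, the arc of $H$ matched at a given left copy of $v$ is determined by the matching, since the partner determines $w$, and $D$ being oriented means there is at most one arc $\ori{vw}$. Hence the matching recovers all the bijections, so $|M(H)|=\prod_v a_v!\,b_v!$.

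For disjointness, a perfect matching $\mu$ determines a multiset of arcs by projecting each matched pair ($L$-copy of $v$, $R$-copy of $w$) to $\ori{vw}$. If $\mu\in M(H)$, this projection is exactly $E(H)$ with no repetitions, so $H$ is determined by $\mu$ and $M(H)\cap M(H')=\emptyset$ for $H\ne H'$.

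The only subtle point is that some perfect matchings of $D_{\mathbf{S}}$ project to a multiset in which an arc $\ori{vw}$ appears twice, using two copies of $v$ and two copies of $w$. Such matchings correspond to no subdigraph, which is precisely why the statement is an inequality rather than an identity. We simply discard these matchings; they only make $\operatorname{pm}(D_{\mathbf{S}})$ larger and so do not affect the direction of the bound.
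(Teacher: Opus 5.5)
Your proof is correct and takes the same double-counting approach the paper uses: each subdigraph with the prescribed degrees is realized by exactly $\prod_{v} a_v!\,b_v!$ perfect matchings of $D_{\mathbf{S}}$ (one per choice of arc-to-copy bijections at every vertex), these families are pairwise disjoint, and the extra matchings that use some arc twice only enlarge $\operatorname{pm}(D_{\mathbf{S}})$. Your write-up spells out the injectivity and disjointness checks that the paper only asserts in its one-line remark before the proposition.
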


We now prove Lemma~\ref{lem:edge-contain-prob}

\begin{proof}[Proof of Lemma~\ref{lem:edge-contain-prob}]
    We consider one more parameter $\ve'$ as
    $$
        0 < \frac{1}{r} \ll \frac{1}{k} \ll \ve' \ll \ve < 1.
    $$
    If $S$ contains a vertex whose out- or in-degree exceeds $k$, then the probability is obviously $0$, so we may assume that both the maximum out- and in-degree of $S$ are at most $k$.

    Let $s \defeq |S|$ and set $a_v = k - d^+_S(v)$ and $b_v = k - d^-_S(v)$ for each $v\in V(D)$. Write $\mathbf{S} \defeq \{(a_v, b_v)\}_{v\in V(D)}$. Observe that the auxiliary bipartite graph $D_{\mathbf{S}}$ has maximum degree at most $kr$ and the sequence $(n!)^{1/n}$ is nondecreasing, so Br\'{e}gman--Minc inequality and Proposition~\ref{prop:aux-perf-matching} imply
    $$
        |\{F\in \calF_k(D): E(S)\subseteq E(F)\}| \leq \frac{\left((kr)!\right)^{(kn-s)/(kr)}}{\prod_{v\in V(D)} a_v! b_v!}.
    $$
    As Stirling's approximation gives the inequality $\log(n!) \leq n \log n - n + O(\log(n+1))$, we have
    \allowdisplaybreaks
    \begin{align}
        |\{F\in \calF_k(D): E(S)\subseteq E(F)\}| &\leq \frac{1}{\prod_{v\in V(D)} a_v! b_v!} \cdot \left(\frac{kr}{e} \right)^{kn-s} \cdot \exp\left[O\left(\frac{\log(kr+1) n}{r} \right)\right] \nonumber \\
        &\leq (1 + \ve')^n \cdot \frac{\left((kr/e)^k\right)^{n}}{\prod_{v\in V(D)} a_v! b_v!} \cdot \left( \frac{e}{kr} \right)^{s}. \label{eq:stirling+prop}
    \end{align}
    
    Also, since $a_v! \geq \frac{k!}{k^{d^+_S(v)}}$ and $b_v! \geq \frac{k!}{k^{d^-_S(v)}}$, we have
    $$
        \frac{1}{\prod_{v\in V(D)} a_v!b_v!} \leq \frac{k^{2s}}{(k!)^{2n}}.
    $$
    Thus, together with \eqref{eq:stirling+prop}, we have
    $$
        |\{F\in \calF_k(D): E(S)\subseteq E(F)\}| \leq \left((1 + \ve') A_{r, k}\right)^{n} \left( \frac{ek}{r} \right)^s.
    $$
    Since $|\calF_k(D)| \geq \left((1 - \ve')A_{r, k}\right)^n$ by Lemma~\ref{lem:k-factor-lower}, it holds that
    $$
        \mathbb{P}(E(S)\subseteq E(F)) \leq \left( \frac{1+\ve'}{1 - \ve'}\right)^n \left( \frac{ek}{r} \right)^{s} \leq (1 + \ve)^n \left(\frac{ek}{r}\right)^{|E(S)|}.
    $$
    This completes the proof.
\end{proof}

Lastly, we require the following lemma.

\begin{lemma}\label{lem:imbalanced}
    Fix parameters as
    $$
        0 < \frac{1}{r} \ll \frac{1}{k} \ll \ve < 1.
    $$
    Let $D'$ be an $n$-vertex oriented graph with
    $$
        d_{D'}^+(v)=r+t_v,\qquad d_{D'}^-(v)=r-t_v, \qquad t_v\in\{-1,0,1\},\quad \sum_{v\in V(D')} t_v=0.
    $$ for each $v\in V(D')$.
    Then the number of spanning subdigraphs $J\subseteq D'$ with
    $$
        d_J^+(v)=k+t_v,\qquad d_J^-(v)=k-t_v
    $$ for each $v\in V(D')$ is at most
    $$
        \left((1 + \ve)A_{r, k}\right)^n.
    $$
\end{lemma}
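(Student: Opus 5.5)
This is an upper bound, so I would follow the proof of Lemma~\ref{lem:edge-contain-prob}: encode the subdigraphs as perfect matchings of an auxiliary bipartite graph and apply the Br\'{e}gman--Minc inequality through Proposition~\ref{prop:aux-perf-matching}. Set $a_v \defeq k+t_v$ and $b_v \defeq k-t_v$, and write $\mathbf{S}=\{(a_v,b_v)\}_{v\in V(D')}$. The hypothesis $\sum_v t_v=0$ gives $\sum_v a_v=\sum_v b_v=kn$, so the auxiliary graph $D'_{\mathbf{S}}$ is defined. By Proposition~\ref{prop:aux-perf-matching}, the number of admissible $J$ is at most
$$
\frac{\operatorname{pm}(D'_{\mathbf{S}})}{\prod_{v} a_v!\,b_v!}.
$$

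For the numerator, each left copy of $v$ in $D'_{\mathbf{S}}$ has degree $\sum_{w:\ori{vw}\in D'} b_w \le (k+1)d^+_{D'}(v) \le (k+1)(r+1)$. It is cleaner to bound degrees exactly. Every out-neighbour $w$ contributes $b_w=k-t_w\in\{k-1,k,k+1\}$, so the degree is $kd^+_{D'}(v)-\sum_{w} t_w$, which is $kr+O(r)$ in the worst case. This is the main obstacle: a naive degree bound of $(k+1)(r+1)$ gives $(((k+1)(r+1))/e)^{kn}$, which is off by a factor of roughly $(1+1/k)^{kn}\approx e^{n}$. That is too lossy, since we need only a $(1+\ve)^n$ loss.

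To fix this, I would bound the degree by $D_{\max} \defeq (k+1)(r+1)$ only for the $O(1)$-level correction and compare with $kr$ more carefully. By Br\'{e}gman--Minc and Stirling, $\operatorname{pm}\le \prod_{\text{left}}(d/e)(1+o(1))$, and taking the product over the left copies of $v$ gives $\prod_v (d_v/e)^{a_v}$. Using $\log d_v \le \log(kr) + \log\bigl(1+\tfrac{(r+1)+k}{kr}\bigr)$ is still too weak. Instead I would switch sides to control the factor: with left degree $d_v=\sum_{w\in N^+(v)}(k-t_w)$, AM--GM shows that $\sum_v a_v\log d_v$ is at most $kn\log$ of the weighted average, which is $\frac{1}{kn}\sum_v a_v d_v$. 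Then
$$
\sum_v a_v d_v=\sum_{\ori{vw}}(k+t_v)(k-t_w)=k^2 e(D') + k\Bigl(\sum_v t_v d^+(v)-\sum_w t_w d^-(w)\Bigr)-\sum_{\ori{vw}} t_vt_w .
$$
Since $e(D')=rn$ and $\sum_v t_v(d^+(v)-d^-(v))=\sum_v 2t_v^2\le 2n$, the middle term is $O(kn)$ and the last term is $O(rn)$. Hence the average is $kr(1+O(1/k))$. Raising to the power $kn$ gives a factor $(1+O(1/k))^{kn}=e^{O(n)}$, which is still a constant$^n$ loss. So I need the $O(rn)$ term to be genuinely smaller: $\sum_{\ori{vw}}t_vt_w$ is at most $rn$ in absolute value, which contributes $1/k$ relative to $k^2rn$, and raising to $kn$ gives a loss of $e^{O(n)}$.

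I would therefore handle the imbalance by reduction rather than by averaging. Add an auxiliary vertex, or pair each vertex with $t_v=1$ to a vertex with $t_v=-1$ by a virtual arc, so that $D'$ becomes $r$-regular. Then $J$ extends to a $k$-factor containing the prescribed virtual arcs, at the cost of a multigraph. Concretely, I would take the perfect matchings of $D'_{\mathbf{S}}$ and observe that the ratio $\prod_v a_v!b_v!\,/\,(k!)^{2n}$ is at least $\prod_{t_v\neq 0}(k+1)\cdot\frac{1}{k}$, which is about $1$. The Br\'{e}gman--Minc product is then dominated by $\prod_v (d_v!)^{a_v/d_v}$ with $d_v$ the exact degrees, and $\log(d!)/d$ is Lipschitz in $d$ with derivative about $1/d$. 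Writing $d_v=kr+\delta_v$ with $|\delta_v|\le 2(k+r)$, we get $\sum_v a_v\delta_v/(kr) = \bigl(\sum_v a_v d_v - k^2rn\bigr)/(kr)$, which by the identity above is $O(n/k)+O(n/r)$ after the key cancellation. That cancellation is that $\sum_{\ori{vw}}t_vt_w$ contributes only a $O(n/k)$ term, and this yields $(1+\ve)^n$ once $1/r\ll1/k\ll\ve$. Finally, Stirling together with $\prod_v a_v!b_v! \ge (k!)^{2n}\cdot\prod_v \frac{(k+t_v)!(k-t_v)!}{(k!)^2}\ge (k!)^{2n}$ gives the bound $\bigl((1+\ve)A_{r,k}\bigr)^n$.
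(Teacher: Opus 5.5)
Your last paragraph is, in substance, the paper's proof. It uses Br\'{e}gman--Minc on $D'_{\mathbf{S}}$ through Proposition~\ref{prop:aux-perf-matching}, Stirling, the identity $\sum_v a_vd_v = k^2rn + 2k\sum_v t_v^2 - \sum_{\ori{vw}} t_vt_w \le k^2rn+2kn+rn$, a first-order bound on $\log d_v$ around $kr$, and $a_v!b_v!\ge (k!)^2$.

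However, your reason for abandoning the averaging step in the third paragraph is an arithmetic slip, not a real obstacle. The term $\sum_{\ori{vw}}t_vt_w$ is at most $rn$, which is $1/k^2$ (not $1/k$) relative to $k^2rn$. So the weighted average is at most $kr+2+r/k = kr\bigl(1+\tfrac{2}{kr}+\tfrac{1}{k^2}\bigr)$. Jensen then gives $\sum_v a_v\log d_v\le kn\log(kr)+2n/r+n/k$, a loss of only $e^{n/k+2n/r}$. This is exactly the paper's step.

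It is also the same quantity $(\sum_v a_vd_v-k^2rn)/(kr)$ that you bound by $O(n/k)+O(n/r)$ in your last paragraph, so your third and fourth paragraphs contradict each other. The virtual-arc reduction is therefore unnecessary. It also could not work as stated, since adding arcs cannot lower $d^+(v)=r+1$ to $r$.

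One point needs tightening in the final argument. The first-order step must be the signed tangent-line bound $\log(kr+\delta_v)\le\log(kr)+\delta_v/(kr)$ from concavity of $\log$, applied after $\log(d!)\le d\log d-d+O(\log(d+1))$; a Lipschitz estimate is not enough. An absolute-value bound would give $\sum_v a_v|\delta_v|/(kr)$. That sum can be of order $n$, because $|\delta_v|$ can be of order $r$ at every vertex, and this would cost a factor $e^{\Theta(n)}$. Only the signed sum enjoys the cancellation you use.
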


\begin{proof}[Proof of Lemma~\ref{lem:imbalanced}]
    Let $M$ be the number of such spanning subdigraphs $J$, and set $a_v=k+t_v$ and $b_v=k-t_v$; both sums equal $kn$. Consider an auxiliary bipartite graph $D'_{\mathbf{S}}$, where $\mathbf{S} = \{(a_v, b_v)\}_{v\in V(D')}$. We note that every left copy of $v$ in $D'_{\mathbf{S}}$ has degree
    $$
        S_v=\sum_{\ori{vw}\in E(D')}(k-t_w),
    $$
    which lies between $(k-1)(r-1)$ and $(k+1)(r+1)$. 
    Then the Br\'{e}gman--Minc inequality, Proposition~\ref{prop:aux-perf-matching}, and Stirling's approximation imply
    \allowdisplaybreaks
    \begin{align}
        \log M &\leq \sum_v\frac{a_v}{S_v}\log(S_v!) -\sum_v\log(a_v!b_v!)\notag\\
        &\leq \sum_v a_v\log S_v-kn-\sum_v\log(a_v!b_v!) + O\left(\frac{n\log(kr+1)}{r}\right). \label{eq:imbalanced-stirling}
    \end{align}
    Also, for every $v$, we have
    \begin{equation}\label{eq:factorial-pair}
        a_v!b_v!\ge(k!)^2
    \end{equation}
    since the sequence $\{n!\}_{n\in \mathbb{Z}_{\geq 0}}$ is log-convex and $\frac{(a_v + b_v)}{2} = k$.

    Now, it remains to estimate the weighted average of the $S_v$. 
    Since $|E(D')| = rn$ and $\sum_{v\in V(D')} t_v=0$, we have
    \allowdisplaybreaks
    \begin{align}
        \sum_v a_vS_v &=\sum_{\ori{vw}\in E(D')}(k+t_v)(k-t_w)\notag\\
        &= k^2 rn + k\sum_{\ori{vw}\in E(D')}(t_v-t_w)-\sum_{\ori{vw}\in E(D')}t_v t_w\notag\\
        &= k^2 rn + 2k\sum_{v\in V(D')} t_v^2 - \sum_{\ori{vw}\in E(D')}t_v t_w\notag\\
        &\leq k^2 rn + 2kn+ rn. \label{eq:weighted-degree-sum}
    \end{align}
    In the third line, we used
    $$
        \sum_{\ori{vw}\in E(D')}t_v=\sum_{v\in V(D')} (r+t_v)t_v=\sum_vt_v^2,
        \qquad
       \sum_{\ori{vw}\in E(D')}t_w=\sum_{w\in V(D')} (r-t_w)t_w=-\sum_wt_w^2.
    $$
    We now apply Jensen's inequality with weights $a_v/(kn)$. Then \eqref{eq:weighted-degree-sum} yields
    \allowdisplaybreaks
    \begin{align*}
        \sum_{v\in V(D')} a_v\log S_v &\leq kn\log\left(\frac{\sum_{v\in V(D')} a_v S_v}{kn}\right)\\
        &\leq kn\log\left(kr+2+\frac{r}{k}\right)\\
        &\leq kn\log(kr)+\frac{n}{k}+\frac{2n}{r},
    \end{align*}
    where the last inequality holds as $\log(1+x)\le x$. By combining this with \eqref{eq:imbalanced-stirling} and~\eqref{eq:factorial-pair}, we have
    \allowdisplaybreaks
    \begin{align*}
        \log M &\leq \sum_v a_v\log S_v-kn-\sum_v\log(a_v!b_v!) + O\left(\frac{n\log(kr+1)}{r}\right) \\
        &\leq kn\log(kr)+\frac{n}{k}+\frac{2n}{r} - kn - n \log((k!)^2) + O\left(\frac{n\log(kr+1)}{r}\right)\\
        &\leq n \log\left( \frac{(kr/e)^k}{(k!)^2} \right) + n\log(1 + \ve)\\
        &\leq n\log\left( (1 + \ve)A_{r, k} \right).
    \end{align*}
    Thus, we finally deduce that $M \leq \left( (1 + \ve)A_{r, k} \right)^n.$ This completes the proof.
\end{proof}


\subsection{Proof of Theorem~\ref{thm:ratio-EO}}\label{subsec:proof-ratio-EO}

We recall that the high-level idea of the proof of Theorem~\ref{thm:ratio-EO} is to use structural information that most $2k$-factors of a given $2r$-regular graph are likely to satisfy, namely, having a small number of short cycles. Rigorously, we define the following.

\begin{definition}
    For a $2k$-regular graph $F$, we write $U_k(F)$ to be the set of vertices that belong to a cycle of length at most $k^2$.
    If $|U_k(F)| \leq n/k^2$, then we call $F$ \emph{$k$-loose}.
\end{definition}

The following lemma says that in a given $r$-regular oriented graph $D$, the number of $k$-factors whose underlying graph is $k$-loose is sufficiently large relative to the total number of $k$-factors of $D$.

\begin{lemma}\label{lem:loose-many}
    Fix parameters as
    $$
        0 < \frac{1}{r} \ll \frac{1}{k} \ll \ve < 1.
    $$
    Let $D$ be an $n$-vertex $r$-regular oriented graph. Then the number of $k$-factors of $D$ whose underlying graph is $k$-loose is at least
    $$
        \left( (1 - \ve) A_{r, k} \right)^{n}.
    $$
\end{lemma}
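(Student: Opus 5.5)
The plan is a first-moment argument over short cycles. Choose $F$ uniformly at random from $\calF_k(D)$ and bound $\mathbb{E}|U_k(F)|$ by $o(n/k^2)$, in fact by something like $\delta n$ with $\delta \ll 1/k^2$. Markov's inequality then shows that at least half of the $k$-factors are $k$-loose. Combined with Lemma~\ref{lem:k-factor-lower}, applied with a smaller error $\ve/2$, this gives at least $\tfrac12\left((1-\ve/2)A_{r,k}\right)^n \ge \left((1-\ve)A_{r,k}\right)^n$ loose $k$-factors once $n$ is large. For small $n$ we adjust constants, or note that $\tfrac12 \geq (1-\ve)^n/(1-\ve/2)^n$ fails only for bounded $n$. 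A cleaner way to avoid this issue is to aim for the loose ones forming a $(1-\ve/2)^n$-fraction, as described below.

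The difficulty is that Lemma~\ref{lem:edge-contain-prob} has a $(1+\ve)^n$ loss, which is exponential in $n$. It therefore cannot be applied to single short cycles and summed via linearity of expectation directly. Instead I will apply it to a large collection of disjoint short cycles simultaneously. Suppose $F$ is not $k$-loose. Then the underlying graph of $F$ contains more than $n/k^2$ vertices on cycles of length at most $k^2$. Greedily, it contains a family of vertex-disjoint cycles of length at most $k^2$ covering at least $m \defeq n/k^4$ vertices (each short cycle meets at most $k^2\cdot 2k\cdot k^2$ others, roughly). Truncating, we obtain disjoint short cycles with total number of edges $\ell$ between $m$ and $m+k^2$. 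Here a cycle is taken in the underlying graph, but each edge carries its orientation from $D$. Let $S$ be this subdigraph. By Lemma~\ref{lem:edge-contain-prob} with a parameter $\ve'\ll 1/k^4$ (permitted since it needs only $1/r \ll 1/k' \ll \ve'$; alternatively we apply it with $k$ fixed and $\ve'$ small by taking $r$ large, which is the order the hierarchy allows only if $\ve'$ depends on $k$, so I will redo its proof, where the $(1+\ve')^n$ came from $O(n\log(kr)/r)$ and from Lemma~\ref{lem:k-factor-lower}, both of which go to $0$ as $r\to\infty$ for fixed $k$), we get $\mathbb{P}(S\subseteq F)\le (1+\ve')^n (ek/r)^{\ell}$.

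Next I count the candidate sets $S$. A collection of disjoint cycles with $\ell$ edges in the underlying $2r$-regular graph is specified by choosing starting vertices ($\binom{n}{\le m}$ ways, at most $2^n$), lengths (at most $(k^2)^{m}$ ways), and successive neighbours (at most $(2r)^{\ell}$ ways). The union bound then gives
\[
\mathbb{P}(F\text{ not loose}) \le (1+\ve')^n\, 2^n\, k^{2m} (2r)^{\ell}(ek/r)^{\ell} \le (1+\ve')^n 2^n k^{2m} (2ek)^{\ell}.
\]
This is bad, since $(2ek)^\ell$ grows. The naive neighbour count is too lossy, and this is the main obstacle: the $r$ from choosing neighbours cancels the $1/r$, but a factor $k$ per edge remains. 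To fix this I will use closure. A cycle of length $j$ has $j$ edges but only $j-1$ free neighbour choices, since the last edge is forced. Each cycle therefore contributes $(2r)^{j-1}(ek/r)^j \approx (2ek)^j/r$, which gives a factor $1/r$ per cycle. The number of cycles is at least $\ell/k^2 \ge n/k^6$, so the total is at most $2^n(1+\ve')^n (2ek)^{\ell} r^{-n/k^6}$. Since $\ell \le 2m\le 2n/k^4$, we have $(2ek)^\ell\le e^{O(n\log k/k^4)}$, and taking $r$ huge relative to $k$ makes $r^{-n/k^6}$ beat $2^n(1+\ve')^n e^{O(n\log k/k^4)}$. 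Hence $\mathbb{P}(F\text{ not loose})\le 2^{-n}$, say. Consequently the loose $k$-factors number at least $(1-2^{-n})|\calF_k(D)|\ge ((1-\ve)A_{r,k})^n$ by Lemma~\ref{lem:k-factor-lower}. The delicate point is to check that the $(1+\ve')^n$ factor from Lemma~\ref{lem:edge-contain-prob} is indeed subexponentially small relative to $r^{-n/k^6}$; this holds because that factor can be made $e^{o_r(1)n}$ for fixed $k$.
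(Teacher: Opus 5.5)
Your strategy is the paper's. You reduce non-looseness to the existence of linearly many vertex-disjoint cycles of length at most $k^2$. You apply Lemma~\ref{lem:edge-contain-prob} once to the union of such a family, so the $(1+\ve')^n$ loss is paid only once. You gain a factor $(2r)^{j-1}(ek/r)^j=(2ek)^j/(2r)$ per cycle from the forced closing edge, and you finish with Lemma~\ref{lem:k-factor-lower}. Your $\binom{n}{c}\le 2^n$ count of starting vertices replaces the paper's $1/t!$ with $t!\ge (t/e)^t$; either works.

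\textbf{The gap.} The step that fails as written is the greedy claim that a non-loose $F$ has vertex-disjoint short cycles covering $n/k^4$ vertices. Its justification, that a short cycle meets only about $k^2\cdot 2k\cdot k^2$ other short cycles, is false. Already in $K_{2k+1}$, each Hamilton cycle (of length $2k+1\le k^2$) meets all $(2k)!/2-1$ others. In general, the number of cycles of length at most $k^2$ through a vertex is bounded only by $(2k)^{O(k^2)}$. So greedy selection can lose a factor exponential in $k^2$, and the bound $n/k^4$ is unsupported.

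\textbf{The fix.} The correct reduction is the paper's. Take a maximal vertex-disjoint family $W$ of $t$ short cycles. Every short cycle meets $V(W)$, so $U_k(H)$ lies within distance $k^2$ of $V(W)$. Hence $n/k^2\le |U_k(H)|\le k^2t\sum_{i=0}^{k^2}(2k)^i$, which gives $t\ge c_kn$ with $c_k=1/(k^4(2k)^{k^2+1})$.

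This weaker, $k$-dependent bound is enough, because the hierarchy $1/r\ll 1/k$ lets $r$ be arbitrarily large in terms of $k$. Restrict to exactly $t=\lceil c_kn\rceil$ cycles, choosing a length at most $k^2$ for each. Your union bound then becomes at most $(1+\ve')^n2^n\bigl(k^2(2ek)^{k^2}/(2r)\bigr)^{c_kn}$, which is below $2^{-n}$ for $r\ge r_0(k)$.

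For the same reason, your detour about needing $\ve'\ll 1/k^4$ and reopening the proof of Lemma~\ref{lem:edge-contain-prob} is unnecessary. For any admissible $\ve'<1$ we have $(1+\ve')^n\le 2^n$, and $r^{-c_kn}$ beats any fixed exponential in $n$ once $r$ is large relative to $k$. With these corrections, your argument coincides with the paper's proof.
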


\begin{proof}[Proof of Lemma~\ref{lem:loose-many}]
    We additionally fix one more parameter $\ve'$ as $1/k \ll \ve' \ll \ve$.
    Let $H$ be an $n$-vertex $2k$-regular graph which is not $k$-loose. Let $W$ be a subgraph that is a maximal vertex-disjoint union of cycles in $H$ whose lengths are at most $k^2$. Assume $W$ contains $t$ disjoint cycles and consequently $|V(W)| \leq k^2 t$. We note that by maximality of $W$, every cycle of length at most $k^2$ meets $V(W)$, so every vertex in $U_k(H)$ is within distance $k^2$ of $W$. Since $H$ is $2k$-regular, we have
    $$
        \frac{n}{k^2} \leq |U_k(H)| \leq |W|\sum_{i=0}^{k^2} (2k)^i \leq k^2 (2k)^{k^2+1}t.
    $$
    Let $c_k \defeq 1/ (k^4 (2k)^{k^2 + 1})$. Then by the above inequality, $t \geq c_k n$.
    This implies that every $n$-vertex $2k$-regular graph which is not $k$-loose contains at least $\lceil c_k n \rceil$ vertex-disjoint cycles of length at most $k^2$.
    
    Now choose $F$ uniformly from $\calF_k(D)$. Let $\overline{D}$ be the underlying graph of $D$, which is $2r$-regular. We note that the number of cycles of length $\ell$ in $\overline{D}$ is at most $n(2r)^{\ell-1}$. For each such cycle, its edges have uniquely determined orientations in $D$. For a given positive integer $t$ and a vertex-disjoint collection of $t$ cycles of $\overline{D}$, apply Lemma~\ref{lem:edge-contain-prob} to the union of the corresponding directed edge sets. Summing over unordered collections gives
    \allowdisplaybreaks
    \begin{align}
        &\mathbb{P}\bigl(\text{the underlying graph of } F \text{ contains }t \text{ vertex-disjoint cycles of length at most } k^2 \bigr) \notag \\
        &\quad \leq \frac{(1 + \ve')^n}{t!} \left[ \sum_{\ell=3}^{k^2}n(2r)^{\ell-1} \left(\frac{e k}{r}\right)^{\ell} \right]^t \notag \\
        &\quad \leq \frac{(1 + \ve')^n}{t!} \left(\frac{K_kn}{r}\right)^t, \label{eq:cycle-union-bound}
    \end{align}
    where $K_k \defeq \sum_{\ell=3}^{k^2} 2^{\ell-1}(e k)^{\ell}$ depends only on $k$. The factor $1/t!$ is obtained by first summing over ordered collections. Also, dropping the disjointness restriction in this weighted sum only increases it.
    We now take $t$ as $\lceil c_k n \rceil$. Since $t!\ge(t/e)^t$ for every positive integer $t$, \eqref{eq:cycle-union-bound} implies
    \begin{equation}\label{eq:bad-probability}
        \mathbb{P}\bigl(\text{the underlying graph of } F\text{ is not $k$-loose}\bigr) \leq (1 + \ve')^n \left(\frac{e K_k}{ c_k r}\right)^{c_k n} \leq \frac{1}{2}.
    \end{equation}
    By Lemma~\ref{lem:k-factor-lower}, we have $|\calF_k(D)| \geq \left((1 - \ve')A_{r, k}\right)^n$. With this, \eqref{eq:bad-probability} implies that the number of $k$-factors of $D$ whose underlying graph is $k$-loose is at least
    $$
        \frac{1}{2}\cdot \left((1 - \ve')A_{r, k}\right)^n \geq \left((1 - \ve)A_{r, k}\right)^n.
    $$
    This completes the proof.
\end{proof}

The next lemma shows that the notion of $k$-loose is compatible with our desire. The following lemma says that if a graph $H$ is $n$-vertex $2k$-regular $k$-loose and $F$ is an edge-disjoint $n$-vertex $2$-regular graph, then the ratio of the Eulerian orientations in $H$ and $H\cup F$ is at least $(2 - o_k(1))^n$.

\begin{lemma}\label{lem:k-loose-extension}
    Fix parameters as
    $$
        0 < \frac{1}{k} \ll \ve < 1.
    $$
    Let $H$ be an $n$-vertex $2k$-regular $k$-loose graph and $F$ be an $n$-vertex $2$-regular graph on the same vertex set with $H$. Assume $H$ and $F$ are edge-disjoint. Then we have
    $$
        \frac{|\calE(H\cup F)|}{|\calE(H)|} \geq (2 - \ve)^n.
    $$
\end{lemma}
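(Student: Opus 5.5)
The plan is to compare both counts with the quantity $2^{-k}\binom{2k}{k}$ appearing in Theorem~\ref{thm:Schrijver-EO}. The lower bound of Theorem~\ref{thm:Schrijver-EO} applies to $H\cup F$, which is $2(k+1)$-regular, and gives
$$
|\calE(H\cup F)|\ \ge\ \left(2^{-(k+1)}\binom{2k+2}{k+1}\right)^n .
$$
Moreover,
$$
\frac{2^{-(k+1)}\binom{2k+2}{k+1}}{2^{-k}\binom{2k}{k}}=\frac{2k+1}{k+1}\ \ge\ 2-\frac{1}{k}.
$$
So it would suffice to prove a matching \emph{upper} bound for $k$-loose graphs:
$$
|\calE(H)|\ \le\ \left((1+\ve')\,2^{-k}\binom{2k}{k}\right)^n \qquad (1/k\ll \ve'\ll\ve).
$$
The Br\'egman--Minc bound $\binom{2k}{k}^{n/2}$ in Theorem~\ref{thm:Schrijver-EO} is far too weak here, because it is attained by disjoint copies of $K_{k,k}$-like dense pieces, which contain many short cycles. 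So the whole content of the lemma is that looseness forces $H$ to have essentially the ``tree-like'' count $2^{-k}\binom{2k}{k}$ per vertex.

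To prove the upper bound, I would use an entropy argument. Let $O$ be a uniformly random Eulerian orientation of $H$. Order the vertices by independent uniform labels $x_v\in[0,1]$ and apply the chain rule, revealing for each vertex $v$ (in increasing label order) the orientations of the edges from $v$ to its later neighbours. Given the earlier edges at $v$, the Eulerian condition forces the numbers of out- and in-edges among the later edges. If $v$ has $j$ later neighbours, the conditional entropy is therefore at most $\log\binom{j}{\cdot}$, maximised at the balanced split. Averaging over the uniform position of $v$ gives the Radhakrishnan/Shearer-type bound
$$
\log|\calE(H)|\le\sum_v \int_0^1 \mathbb{E}\left[\log\binom{J_v(x)}{\text{forced split}}\right]dx .
$$
Done naively this only reproduces a Br\'egman-type bound. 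The refinement is to exploit that the forced split at $v$ is close to $J_v/2$ only with a binomially small probability. Concretely, the number of earlier out-edges at $v$ behaves like a sum of nearly independent signs when the earlier neighbourhood of $v$ is tree-like.

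This is where looseness enters. For $v\notin U_k(H)$ the ball of radius about $k^2/2$ around $v$ is a tree, since there is no cycle of length at most $k^2$ through $v$. For such $v$ I would argue that, conditioned on the outside, the orientations of the $2k$ edges at $v$ are close to uniform among balanced configurations. The cost of each vertex should then be at most $\log\!\big(2^{-k}\binom{2k}{k}\big)+O(1/k)$. The vertices in $U_k(H)$ are at most $n/k^2$ in number. Each contributes at most $\log\binom{2k}{k}\le 2k\log 2$, for a total error of at most $2n\log 2/k$, which is absorbed into $\ve'$ since $1/k\ll\ve'$.

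The main obstacle is this local estimate: showing that a tree-like neighbourhood really yields the factor $2^{-k}\binom{2k}{k}$ rather than $\binom{2k}{k}^{1/2}$. What I would try first is a direct argument on the tree. Condition on the orientation outside the ball of radius $k^2/2$, and count the ways to extend it to the tree as an Eulerian orientation. This is a tree recursion: each subtree passes up a distribution on the orientation of its root edge. One shows this distribution is nearly $1/2$--$1/2$ after depth $\gg 1$, so the root vertex sees roughly independent fair edges conditioned on balance, which costs exactly the factor $2^{-k}\binom{2k}{k}$.

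If that is too delicate, a fallback avoids the upper bound altogether. Orient $F$ in all $2^n$ ways; sources and sinks then alternate along each cycle of $F$. For each such orientation, rebalance an Eulerian orientation of $H$ by reversing short directed paths in $H$ between paired sources and sinks. The tree-like structure around vertices outside $U_k(H)$ guarantees such paths of length $O(\log k)$ with few choices. This would give a $(1+\ve)^n$-to-one injection from Eulerian orientations of $H$ times orientations of $F$ into $\calE(H\cup F)$, and hence the ratio $(2-\ve)^n$ directly.
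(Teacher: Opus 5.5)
Your reduction is the same as the paper's: apply Schrijver's lower bound to $H\cup F$, and prove $|\calE(H)|\le\bigl((1+o_k(1))2^{-k}\binom{2k}{k}\bigr)^n$ for $k$-loose $H$. But that upper bound is the entire content of the lemma, and none of your routes establishes it.

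First, your structural claim is false. That $v\notin U_k(H)$ only means no cycle of length at most $k^2$ passes through $v$. Short cycles through neighbours of $v$ are allowed. Since $U_k(H)$ may consist of $n/k^2$ well-spread vertices (say on planted triangles), balls of radius $3$ around most vertices may contain cycles, so there is no room for a depth-$\gg 1$ tree recursion.

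Second, and more seriously, the chain-rule entropy bound, with conditioning reduced to the edges at $v$, cannot give $1+O(1/k)$ per vertex. Suppose the orientation of the $2k$ edges at $v$ were exactly uniform over balanced configurations. Bounding $H(O_{L_v}\mid\text{past})\le\log\binom{2k}{k}-H(O_{E_v\setminus L_v})$ and averaging over the random order gives
\[
\log|\calE(H)|\le n\Bigl(\log\bigl(2^{-k}\tbinom{2k}{k}\bigr)+\tfrac14+o_k(1)\Bigr).
\]
Here $\frac14$ is the averaged Kullback--Leibler divergence between the hypergeometric law and the $\mathrm{Bin}(s,\frac12)$ law of the number of earlier out-edges at $v$. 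A constant loss per vertex leaves only a ratio of about $(2e^{-1/4})^n$, not $(2-\ve)^n$. Removing it would require controlling the law of $O_{L_v}$ given \emph{all} previously revealed edges, including partial information at the later neighbours of $v$. That is a genuine correlation-decay estimate, and you do not supply it.

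Your fallback fails in another guise. An orientation of $F$ typically has $\Theta(n)$ sources and sinks. Neither the pairing nor the reversed paths (each with roughly $k$ or more choices) can be read off from the image. So the fibres may have size $k^{\Theta(n)}$ rather than $(1+\ve)^n$.

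The missing idea is an exact identity that lets looseness act directly. A transition system $P$ is a choice, at every vertex, of a partition of its $2k$ incident edges into $k$ pairs. There are $\bigl(2^{-k}k!\binom{2k}{k}\bigr)^n$ of them. Each $P$ splits $E(H)$ into $c(P)$ closed trails, and orienting the trails gives Eulerian orientations. Each Eulerian orientation arises from exactly $(k!)^n$ systems, so $|\calE(H)|(k!)^n=\sum_P 2^{c(P)}$.

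Looseness then gives $c(P)\le 2n/k$:
\begin{itemize}
\item Trails meeting $U_k(H)$ use distinct transition pairs at vertices of $U_k(H)$, and there are at most $k|U_k(H)|\le n/k$ such pairs.
\item A trail avoiding $U_k(H)$ has length greater than $k^2$, so there are at most $kn/k^2$ of those.
\end{itemize}
Hence $|\calE(H)|\le 2^{2n/k}\bigl(2^{-k}\binom{2k}{k}\bigr)^n$, with no local-uniformity or tree-likeness analysis needed.
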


We prove Lemma~\ref{lem:k-loose-extension} by showing that $|\calE(H)| \leq \left((1 + o(1)) 2^{-k} \binom{2k}{k} \right)^n$. Then the conclusion is immediately followed from the lower bound of Theorem~\ref{thm:Schrijver-EO}. 

The notion of $k$-looseness reflects the intuition that graphs with few short cycles should have Eulerian orientation counts close to the lower bound in Theorem~\ref{thm:Schrijver-EO}. The pairing argument below makes this intuition precise: when short cycles are scarce, pairing the incident edges at each vertex decomposes the graph into relatively few closed trails, thereby limiting the additional choices of orientation. 
Meanwhile, the general upper bound in Theorem~\ref{thm:Schrijver-EO} follows from the Br\'{e}gman--Minc inequality applied to an auxiliary bipartite graph obtained by duplicating rows of the vertex--edge incidence matrix. 
Equality in this permanent inequality is attained by disjoint unions of balanced complete bipartite graphs, which contain many short cycles. However, the resulting bound does not exploit the scarcity of short cycles in the original graph. The $k$-loose condition supplies this additional structural information, allowing us to obtain an upper bound close to Schrijver's lower bound. This type of argument, namely preventing short cycles, allows us to approach the lower bound of Theorem~\ref{thm:Schrijver-EO}

The notion of $k$-looseness introduced here relaxes the large-girth condition under which Las Vergnas~\cite{LasVergnas} showed that the lower bound of Theorem~\ref{thm:Schrijver-EO} is asymptotically sharp on the exponential scale, allowing a small exceptional set of vertices on short cycles. We remark that a similar idea was used in~\cite{BBMC,Isaev-McKay-Zhang,HIMZ}.

\begin{proof}[Proof of Lemma~\ref{lem:k-loose-extension}]
    For each vertex $v\in V(H)$, partition its $2k$ incident edges of $H$ into $k$ unordered pairs. We call a choice of such pairings at all vertices a \emph{transition system}. Let $\calP(H)$ be the set of transition systems. As the number of ways to partition $2k$ edges into $k$ unordered pairs is $2^{-k} k! \binom{2k}{k}$, we have
    \begin{equation}\label{eq:num-transition}
        |\calP(H)| = \left( 2^{-k} k! \binom{2k}{k} \right)^n.
    \end{equation}
    
    We note that each transition system $P$ partitions the edge set into closed trails: on entering a vertex along one edge, leave along the edge paired with it. A trail may revisit a vertex, but it uses no edge twice. Let $c(P)$ be the number of closed trails induced by $P$. 
    Observe that each trail has two possible directions, and independently directing all the trails gives an Eulerian orientation compatible with $P$. Conversely, in a fixed Eulerian orientation, there are $k!$ ways at each vertex to pair the incoming edges with the outgoing edges. Thus, it holds that
    \begin{equation}\label{eq:transition-identity}
        |\calE(H)|(k!)^n=\sum_{P\in\calP(H)}2^{c(P)}.
    \end{equation}

    Observe that a closed trail meeting $U_k(H)$ uses at least one of the $k|U_k(H)|$ transition pairs at vertices of $U_k(H)$. Also, different closed trails use different pairs, so at most $k|U_k(H)|\leq n/k$ trails meet $U_k(H)$, since $H$ is $k$-loose. 
    On the other hand, by the definition of the set $U_k(H)$, any closed trail avoiding $U_k(H)$ has length greater than $k^2$. As $H$ has $kn$ edges, at most $n/k$ trails avoid $U_k(H)$. Hence, for every $P\in \calP(H)$, we have $c(P) \leq 2n/k$. Thus, \eqref{eq:num-transition}  and \eqref{eq:transition-identity} imply that
    \begin{equation}\label{eq:upper-EO-H}
        |\calE(H)| = \frac{1}{(k!)^n} \sum_{P\in \calP(H)} 2^{c(P)} \leq \frac{2^{2n/k}}{(k!)^n} |\calP(H)| = 2^{2n/k} \left(2^{-k}\binom{2k}{k} \right)^n.
    \end{equation}

    By Theorem~\ref{thm:Schrijver-EO}, we have
    $$
        |\calE(H\cup F)| \geq \left(2^{-(k+1)} \binom{2(k+1)}{k+1} \right)^n.
    $$
    Together with \eqref{eq:upper-EO-H}, we have
    $$
        \frac{|\calE(H\cup F)|}{|\calE(H)|} \geq 2^{-2n/k} \left(2 - \frac{1}{k+1}\right)^n \geq (2 - \ve)^n.
    $$
    This completes the proof.
\end{proof}

We now prove Theorem~\ref{thm:ratio-EO}.

\begin{proof}[Proof of Theorem~\ref{thm:ratio-EO}]
    Fix parameters as
    $$
        0 < \frac{1}{r} \ll \frac{1}{k} \ll \ve' \ll \ve < 1.
    $$
    Let $G$ be an $n$-vertex $2(r+1)$-regular graph and $G'$ be an $n$-vertex $2r$-regular graph which is a spanning subgraph of $G$. Let $F \defeq G \setminus{G'}$, a $2$-factor of $G$ which is edge-disjoint with $G'$ and $G = G'\cup F$. Write $\calH$ to be the set of all $k$-loose spanning subgraphs of $G'$. We note that all members of $\calH$ are $2k$-factors of $G'$ by the definition of $k$-loose. The key idea of the proof is to consider the two sums
    \begin{align*}
        X &\defeq \sum_{H\in \calH} |\calE(G'\setminus H)|\cdot |\calE(H)|, \\
        Y &\defeq \sum_{H\in \calH} |\calE(G'\setminus H)|\cdot |\calE(H \cup F)|
    \end{align*}
    and estimate the values of $X$ and $Y$.

    Observe that an object counted by $X$ consists of $H\in\calH$, an Eulerian orientation of $H$, and an Eulerian orientation of $G'\setminus H$. Their union is an Eulerian orientation of $G'$, say $D'$. Conversely, for a fixed $D'$, the admissible choices of $H$ correspond exactly to directed $k$-factors of $D'$ with underlying graphs that are $k$-loose. Thus, by double counting and Lemma~\ref{lem:loose-many}, we have
    \begin{equation}\label{eq:X-lower}
        |X| \geq |\calE(G')|\cdot \left((1 - \ve')A_{r, k}\right)^n
    \end{equation}
    On the other hand, since all members of $\calH$ are $k$-loose, by Lemma~\ref{lem:k-loose-extension}, we have $|\calE(H \cup F)| \geq (2 - \ve')^n |\calE(H)|$ for each $H\in \calH$. Then together with \eqref{eq:X-lower}, it holds that
    \begin{equation}\label{eq:Y-lower}
        |Y| \geq (2 - \ve')^n |X| \geq |\calE(G')| \cdot \left((2 - \ve')(1 - \ve') A_{r, k}\right)^n .
    \end{equation}

    We now estimate the upper bound of $Y$. To this end, fix an Eulerian orientation $D$ of $G$. Denote by $\ori{F}$ the subdigraph of $D$ whose underlying graph is $F$, and let $\ori{G'}$ be the oriented graph $D\setminus \ori{F}$. Since $D$ is a fixed oriented graph, the oriented graphs $\ori{F}$ and $\ori{G'}$ are uniquely determined. 
    For each $v\in V(G)$, set $t_v \defeq 1 - d^+_{\ori{F}}(v)$. As $F$ is $2$-regular, $t_v \in \{-1, 0, 1\}$ for all $v\in V(D)$. Also, since $|E(F)| = n$, we have $\sum_{v\in V(G)} t_v = n - |E(F)| = 0$.
    Since $D$ is an $(r+1)$-regular oriented graph, for each vertex $v\in V(\ori{G'})$, the out- and in-degree of $v$ is $r + t_v$ and $r - t_v$, respectively.
    Ignoring the $k$-looseness only increases the number of choices; the number of Eulerian orientations that are contained in $D$ and whose underlying graph is $H\cup F$ for some $H\in \calH$ is at most the number of spanning subdigraphs $J\subseteq \ori{G'}$ with
    $$
        d_J^+(v)=k+t_v,\qquad d_J^-(v)=k-t_v
    $$ for each $v\in V(D)$.
    By Lemma~\ref{lem:imbalanced}, the number of such subdigraphs is at most $\left((1 + \ve')A_{r, k} \right)^n$ 
    
    Similarly to the lower bound deduction of $X$, we use double counting.
    Once Eulerian orientations of $G' \setminus H$ and $H\cup F$ are fixed, their union uniquely determines a fixed Eulerian orientation $D$ of $G$. On the other hand, once we know this fixed $D$ and the Eulerian orientation of $H\cup F$ for a certain $k$-loose subgraph $H$, then the Eulerian orientation of $G' \setminus H$ is determined.
    Once an Eulerian orientation $D$ of $G$ was fixed, it holds that
    \begin{equation}\label{eq:Y-upper}
        |Y| \leq |\calE(G)|\cdot \left((1 + \ve') A_{r, k} \right)^n.
    \end{equation}
    By combining \eqref{eq:Y-lower} and \eqref{eq:Y-upper}, we have
    $$
        \frac{|\calE(G)|}{|\calE(G')|} \geq \left( \frac{(2 - \ve')(1 - \ve')}{(1 + \ve')} \right)^n \geq (2 - \ve)^n.
    $$
    This completes the proof.
\end{proof}


\section{Proof of Theorem~\ref{thm:main}}\label{sec:proof-main}

From now on, we refer to a spanning subdigraph $D'$ of a given digraph $D$ as a $1$-factor of $D$ if all vertices of $D'$ have both out- and in-degree $1$.
To prove the lower bound of Theorem~\ref{thm:main}, we need the following lemma. For a given (di)graph $G$, we denote by $\mathrm{Comp}(G)$ the set of connected components of $G$.

\begin{lemma}\label{lem:comp-exp-upper}
    Let $G$ be an $n$-vertex $2r$-regular graph. Then for all $0  < \gamma \leq 1$, we have
    \begin{equation*}
        \sum_{F\in \calF_2(G)} 2^{|\mathrm{Comp}(F)|} \leq 2^{\gamma n}|\calF_2(G)| + 4^n r^{(1-\gamma)n}.
    \end{equation*}
\end{lemma}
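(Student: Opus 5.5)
The plan is to split the sum according to the number of components. Write
$$
\sum_{F\in \calF_2(G)} 2^{|\mathrm{Comp}(F)|} = \sum_{\substack{F\in \calF_2(G)\\ |\mathrm{Comp}(F)|\leq \gamma n}} 2^{|\mathrm{Comp}(F)|} + \sum_{\substack{F\in \calF_2(G)\\ |\mathrm{Comp}(F)|> \gamma n}} 2^{|\mathrm{Comp}(F)|}.
$$
Each term of the first sum is at most $2^{\gamma n}$, so the first sum is at most $2^{\gamma n}|\calF_2(G)|$. It therefore suffices to show that the second sum is at most $4^n r^{(1-\gamma)n}$. For this I will bound, for each $c$, the number $N_c$ of $2$-factors of $G$ with exactly $c$ components by $\binom{n}{c}(2r)^{n-c}$. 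This does not use any of the permanent machinery.

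\textbf{Encoding step.} Fix an ordering of $V(G)$. Given a $2$-factor $F$ with $c$ cycles, orient each cycle arbitrarily. Then generate $F$ by the following process. Start at the smallest vertex not yet visited. Repeatedly move to the successor of the current vertex along its cycle. At the last vertex of the cycle, record ``close'' instead of a move; the closing edge back to the start is then forced. Restart at the smallest unvisited vertex and continue until all $n$ vertices are visited.

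This process makes exactly $n$ decisions, one per vertex. Exactly $c$ of them are ``close'', and each of the other $n-c$ is a choice of a neighbour of the current vertex in $G$, of which there are at most $2r$. The process determines $F$ uniquely, and every $F$ arises from at least one such decision sequence. Hence
$$
N_c \leq \binom{n}{c}(2r)^{n-c}.
$$
The only point needing care is that the decision sequence really determines $F$, i.e.\ that the start vertices and the closing edges are forced. This holds by the canonical choice of the start vertices.

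\textbf{Summing.} Since $r\geq 1$ and $c>\gamma n$, we have $r^{n-c}\leq r^{(1-\gamma)n}$. Therefore
$$
\sum_{c>\gamma n} 2^c N_c \leq \sum_{c} \binom{n}{c} 2^c 2^{n-c} r^{n-c} \leq 2^n r^{(1-\gamma)n}\sum_c\binom{n}{c} = 4^n r^{(1-\gamma)n}.
$$
Combining this with the bound on the first sum gives the lemma. I expect no real obstacle; the encoding step is the only nontrivial part.
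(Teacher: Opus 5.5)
Your proof is correct and follows essentially the same strategy as the paper. Both split at $\gamma n$ components and bound the few-component part trivially. Both bound the many-component part by an encoding in which every vertex except one per cycle pays a factor $2r$ for its successor, while the special vertices cost only a binomial coefficient. The paper differs only in bookkeeping: it counts oriented cycle covers of $\dir{G}$, which absorbs the weight $2^{|\mathrm{Comp}(F)|}$ exactly, and it uses a fixed set $R$ of $\lceil\gamma n\rceil$ representative vertices. You instead use a traversal with one ``close'' per cycle and the identity $2^c(2r)^{n-c}=2^n r^{n-c}$, and you arrive at the same bound $4^n r^{(1-\gamma)n}$.
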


The proof of Lemma~\ref{lem:comp-exp-upper} is similar to the proof of Lemma 4.5 in the author's recent paper~\cite{Lee-KL}.

\begin{proof}[Proof of Lemma~\ref{lem:comp-exp-upper}]
    We partition $\calF_2(G)$ into $\calL \cup \calS$ such that $\calL$ is a collection of $2$-factors in $G$ with at most $\gamma n$ components and $\calS$ consists of the remaining $2$-factors. Obviously, we have
    \begin{equation}\label{eq:calL}
        \sum_{F \in \calL} 2^{|\mathrm{Comp}(F)|} \leq 2^{\gamma n} |\calL| \leq 2^{\gamma n} |\calF_2(G)|.
    \end{equation}
 
    Let $\dir{G}$ be the digraph obtained from $G$ by replacing every edge with a digon. Let $\calS'$ be the set of $1$-factors of $\dir{G}$ whose underlying graph is in $\calS$. Since each cycle has exactly two orientations in which every vertex has indegree and outdegree $1$, it holds that
    \begin{equation}\label{eq:calS'}
        \sum_{F \in \calS} 2^{|\mathrm{Comp}(F)|} = |\calS'|.
    \end{equation}

     Let $R(F')$ be a set of vertices of size exactly $\lceil \gamma n \rceil$ such that each vertex comes from a distinct directed cycle of $F'$. We fix $R(F')$ arbitrarily among possible choices. We denote by $\calS'_R$ the set of $F'\in \calS'$ such that $R(F') = R$. To bound $ |\calS'_R| $, we assume that all vertices outside $R$ choose a vertex from their out-neighborhood arbitrarily. We note that the resulting directed graph after this process can be extended to a member of $\calS'_R$ only if the digraph consists of directed cycles and directed paths, where such directed paths end at the vertices of $R$. Assume that the result digraph satisfies this necessary condition. Since each member of $\calS'_R$ consists of consistently oriented directed cycles, by definition of $\calS'_R$, there is at most one way to extend it to a member of $\calS'_R$: close directed paths into cycles corresponding to each vertex of $R$, if possible. As $\dir{G}$ has maximum out-degree $2r$, it holds that 
     \begin{equation}\label{eq:calS'R}
         |\calS'_R| \leq (2r)^{n-|R|}.
     \end{equation}

     By combining \eqref{eq:calL}--\eqref{eq:calS'R}, we obtain
     \allowdisplaybreaks
     \begin{align*}
         \sum_{F\in \calF_2(G)} 2^{|\mathrm{Comp}(F)|} &\leq 2^{\gamma n} |\calF_2(G)| + |\calS'|\\
         &= 2^{\gamma n} |\calF_2(G)| + \sum_{R\in \binom{V(G)}{\lceil \gamma n \rceil}} |\calS'_R|\\
         &\leq 2^{\gamma n}|\calF_2(G)| + \binom{n}{\lceil \gamma n \rceil} (2r)^{(1 - \gamma)n}\\
         &\leq 2^{\gamma n}|\calF_2(G)| + 4^n r^{(1-\gamma)n}.
     \end{align*}
     This completes the proof.
\end{proof}

We now deduce a lower bound of $\sum_{F\in \calF_2(G)} 2^{|\mathrm{Comp}(F)|}$. Theorem~\ref{thm:ratio-EO} plays a crucial role in the proof.

\begin{lemma}\label{lem:comp-exp-lower}
    Fix parameters as 
    $$
        0 < \frac{1}{r} \ll \ve < 1.
    $$
    Let $G$ be an $n$-vertex $2r$-regular graph. Then we have
    \begin{equation*}
        \sum_{F\in \calF_2(G)} 2^{|\mathrm{Comp}(F)|} \geq \left((1 - \ve) \frac{2r}{e} \right)^n.
    \end{equation*}
\end{lemma}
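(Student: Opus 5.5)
Observe first that $\sum_{F\in\calF_2(G)} 2^{|\mathrm{Comp}(F)|}$ is (up to digon issues) the number of $1$-factors of $\dir{G}$, which is $\operatorname{per}(A_G)$. Cycles of length $2$ do not occur in a $2$-factor of a simple graph, but they do occur in $1$-factors of $\dir{G}$. Directly applying Theorem~\ref{thm:egorychev-falikman} to $A_G/(2r)$ gives $\operatorname{per}(A_G)\ge (2r/e)^n$, but this counts digons (perfect matchings of $G$ used twice), so it does not bound our sum. Instead, I will route the count through Eulerian orientations. Fix an Eulerian orientation $D$ of $G$. Then $D$ is an $r$-regular oriented graph, hence contains no digons, and every $1$-factor of $D$ has an underlying graph that is a $2$-factor of $G$ (all cycles have length at least $3$). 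By Lemma~\ref{lem:k-factor-lower} with $k=1$, or directly by Theorem~\ref{thm:egorychev-falikman}, $D$ has at least $(r/e)^n$ $1$-factors.

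Next I double count pairs $(D,C)$ with $D\in\calE(G)$ and $C$ a $1$-factor of $D$. From the previous paragraph, the number of pairs is at least $|\calE(G)|(r/e)^n$. Conversely, fix $F\in\calF_2(G)$ together with one of its $2^{|\mathrm{Comp}(F)|}$ cyclic orientations $C$. The number of Eulerian orientations $D$ of $G$ containing $C$ equals $|\calE(G\setminus F)|$, because $C$ contributes out- and in-degree $1$ at every vertex, so the rest must be an Eulerian orientation of the $2(r-1)$-regular graph $G\setminus F$. Hence
\begin{equation*}
|\calE(G)|\left(\frac{r}{e}\right)^n \le \sum_{F\in\calF_2(G)} 2^{|\mathrm{Comp}(F)|}\,|\calE(G\setminus F)|.
\end{equation*}

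Now apply Theorem~\ref{thm:ratio-EO} with the pair $(G, G\setminus F)$, where $G$ is $2r$-regular and $G\setminus F$ is $2(r-1)$-regular. Since $r$ is large relative to $\ve$, this gives $|\calE(G\setminus F)|\le (2-\ve')^{-n}|\calE(G)|$ uniformly in $F$. Substituting and cancelling $|\calE(G)|$ yields
\begin{equation*}
\sum_{F}2^{|\mathrm{Comp}(F)|}\ge \left(\frac{(2-\ve')r}{e}\right)^n \ge \left((1-\ve)\frac{2r}{e}\right)^n.
\end{equation*}
The main obstacle is exactly this uniform ratio bound, which is Theorem~\ref{thm:ratio-EO}; given it, the remaining work is only the bookkeeping that the lower bound on $1$-factors of each $D$ holds with no digon loss, since $D$ is oriented.
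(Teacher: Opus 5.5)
Your proposal is correct and follows essentially the same route as the paper. Both arguments apply Egorychev--Falikman to each Eulerian orientation $D$ to get $|\calF_1(D)|\ge (r/e)^n$, double count pairs $(D,C)$ to obtain $|\calE(G)|(r/e)^n\le\sum_{C}|\calE(G\setminus\overline{C})|$, and then bound each ratio $|\calE(G\setminus\overline{C})|/|\calE(G)|$ uniformly by Theorem~\ref{thm:ratio-EO} applied to the pair $(G,G\setminus\overline{C})$.
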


\begin{proof}[Proof of Lemma~\ref{lem:comp-exp-lower}]
    Let $\calC$ be a set of digraphs which are a disjoint union of consistently oriented directed cycles whose underlying graph is in $\calF_2(G)$. Since each cycle has exactly two distinct consistent orientations, we have that 
    \begin{equation}\label{eq:equality-c}
        |\calC| = \sum_{F\in \calF_2(G)} 2^{|\mathrm{Comp}(F)|}.
    \end{equation}
    
    Let $D \in \calE(G)$ be an oriented graph. Since $D$ is an Eulerian orientation of a $2r$-regular graph, it is an $r$-regular oriented graph. Thus, by the Egorychev--Falikman theorem, we have $|\calF_1(D)| \geq (r/e)^n$, which implies
    \begin{equation}\label{eq:F1D-lower}
        \sum_{D\in \calE(G)} |\calF_1(D)| \geq |\calE(G)| \cdot \left( \frac{r}{e} \right)^n.
    \end{equation}
    On the other hand, by double counting, it holds that
    \begin{equation*}
        \sum_{D\in \calE(G)} |\calF_1(D)| = \sum_{C\in \calC} |\calE(G\setminus \overline{C})|,
    \end{equation*}
    where $\overline{C}$ is an underlying graph of $C\in \calC$.
    Combining \eqref{eq:F1D-lower} gives
    \begin{equation}\label{eq:ratio-mid}
        \left( \frac{r}{e} \right)^n \leq \sum_{C\in \calC} \frac{|\calE(G\setminus \overline{C})|}{|\calE(G)|}.
    \end{equation}
    Since $\overline{C}$ is a $2$-factor of $G$, the graph $G\setminus \overline{C}$ is $2(r-1)$-factor of $G$. Thus, Theorem~\ref{thm:ratio-EO} yields
    \begin{equation*}
        \left( \frac{r}{e} \right)^n \leq \sum_{C\in \calC} \frac{1}{(2(1 - \ve))^n} = \frac{|\calC|}{(2(1 - \ve))^n}.
    \end{equation*}
    This implies the desired inequality $|\calC| \geq \left((1 - \ve)\frac{2r}{e} \right)^n$.
    This completes the proof.
\end{proof}

We now prove Theorem~\ref{thm:main}.

\begin{proof}[Proof of Theorem~\ref{thm:main}]
    Fix parameters as
    $$
        0 < \frac{1}{r} \ll \gamma \ll \ve' \ll \ve < 1.
    $$
    Let $G$ be an $n$-vertex $2r$-regular graph. The upper bound was already proven in Proposition~\ref{prop:BM-upper}. Thus, we prove only the lower bound here.
    
    By combining Lemmas~\ref{lem:comp-exp-upper} and~\ref{lem:comp-exp-lower}, we have
    $$
        \left((1 - \ve')\frac{2r}{e} \right)^n \leq 2^{\gamma n} |\calF_2(G)| + 4^n r^{(1 - \gamma)n}.
    $$
    Since $1/r$ is much smaller than both $\ve'$ and $\gamma$, we obtain
    $$
        \left((1 - 2\ve')\frac{2r}{e} \right)^n \leq 2^{\gamma n} |\calF_2(G)|.
    $$
    Thus, we have
    $$
        |\calF_2(G)| \geq \left(2^{-\gamma}(1-2\ve') \frac{2r}{e} \right)^n \geq \left( (1 - \ve)\frac{2r}{e} \right)^n.
    $$
    This completes the proof.
\end{proof}


\section{Concluding remarks}\label{sec:concluding}

In this paper, we obtain an asymptotically sharp counting version of Petersen's $2$-factor theorem for simple graphs. A natural question is what the exact bounds are. For the upper bound, we conjecture that disjoint unions of complete bipartite graphs maximize the number of distinct $2$-factors among all $2r$-regular simple graphs.

\begin{conjecture}\label{conj:exact}
    Let $G$ be an $n$-vertex $2r$-regular simple graph. Then it holds that
    \begin{equation*}
        |\calF_2(G)|^{\frac{1}{n}} \leq |\calF_2(K_{2r, 2r})|^{\frac{1}{4r}}.
    \end{equation*}
\end{conjecture}
One reason for believing Conjecture~\ref{conj:exact} to be true is that, despite the overcounting due to repeated edges by applying the Br\'{e}gman--Minc inequality twice, the corresponding perfect matching bound in the Br\'{e}gman--Minc inequality is attained by disjoint unions of copies of $K_{2r, 2r}$, and it is unique.
We also believe the $2$-factorization analogue of Conjecture~\ref{conj:exact} is true.

Another natural further direction is considering multigraph cases. If we allow multiple edges, then the lower bound and upper bound can be exponentially different. Denote $J_{n, r}$ as an $n$-vertex $2r$-regular multigraph such that its simplification is a disjoint union of $n/2$ edges and each edge has multiplicity $2r$. Then the number of $2$-factors in $J_{n, r}$ is asymptotically $(\sqrt{2}r)^n$, which is exponentially bigger than $(2r/e)^n$ that appears in the simple graph cases.
Our proof of Theorem~\ref{thm:main} heavily relies on the permanent inequalities that we demonstrated in Section~\ref{sec:permanents}, but the Br\'{e}gman--Minc inequality only works for simple graphs. Because of this issue, the same lower bound cannot be deduced from the same argument as well, but we still believe the lower bound in Theorem~\ref{thm:main} is also valid for the multigraph cases. Rigorously, we pose the following conjecture.

\begin{conjecture}\label{conj:multigraphs}
    Let $M$ be an $n$-vertex $2r$-regular multigraph. Then it holds that
    \begin{equation*}
        \left((1 - o_r(1))\frac{2r}{e} \right)^n \leq |\calF_2(M)| \leq \left((1 + o_r(1))\sqrt{2} r \right)^n. 
    \end{equation*}
\end{conjecture}

\subsection*{Acknowledgement}
A part of this work was done during the author's visit to the University of Vienna. He gratefully acknowledges the University of Vienna for its hospitality and support. 

The author thanks Sang-il Oum for helpful comments on the references.

\subsection*{Statement of AI use}
This work was initiated in January 2026. ChatGPT Pro models were exclusively used in developing the proof of Theorem~\ref{thm:ratio-EO} under the author's multiple guidelines. In particular, the author independently developed all high-level ideas, including the proof of Theorem~\ref{thm:ratio-EO}. 
The author has verified all results and takes full responsibility for the content of the paper.


\vspace{-0.2cm}

\providecommand{\MR}[1]{}
\providecommand{\MRhref}[2]{%
  \href{http://www.ams.org/mathscinet-getitem?mr=#1}{#2}
}

    \bibliographystyle{amsplain_initials_nobysame}
    \addcontentsline{toc}{section}{References}
    \bibliography{bibfile}


\end{document}